\documentclass[10pt]{article}

% Set left margin - The default is 1 inch, so the following 
% command sets a 1.25-inch left margin.
%\setlength{\oddsidemargin}{-0.5in}

% Set width of the text - What is left will be the right margin.
% In this case, right margin is 8.5in - 1.25in - 6in = 1.25in.
%\setlength{\textwidth}{7.5in}

% Set top margin - The default is 1 inch, so the following 
% command sets a 0.75-inch top margin.
%\setlength{\topmargin}{-0.5in}

% Set height of the text - What is left will be the bottom margin.
% In this case, bottom margin is 11in - 0.75in - 9.5in = 0.75in
%\setlength{\textheight}{9.25in}

% Set the beginning of a LaTeX document
\usepackage{preamble}

\begin{document}

\newcommand\restr[2]{{% we make the whole thing an ordinary symbol
  \left.\kern-\nulldelimiterspace % automatically resize the bar with \right
  #1 % the function
  \vphantom{\big|} % pretend it's a little taller at normal size
  \right|_{#2} % this is the delimiter
  }}

\makeatletter
% we use \prefix@<level> only if it is defined
\renewcommand{\@seccntformat}[1]{%
  \ifcsname prefix@#1\endcsname
    \csname prefix@#1\endcsname
  \else
    \csname the#1\endcsname\quad
  \fi}
% define \prefix@section
%\newcommand\prefix@section{Section \thesection: }
\makeatother

\makeatletter
\newcommand{\colim@}[2]{%
  \vtop{\m@th\ialign{##\cr
    \hfil$#1\operator@font colim$\hfil\cr
    \noalign{\nointerlineskip\kern1.5\ex@}#2\cr
    \noalign{\nointerlineskip\kern-\ex@}\cr}}%
}
\newcommand{\colim}{%
  \mathop{\mathpalette\colim@{\rightarrowfill@\textstyle}}\nmlimits@
}
\makeatother

\newcommand\rightthreearrow{%
        \mathrel{\vcenter{\mathsurround0pt
                \ialign{##\crcr
                        \noalign{\nointerlineskip}$\rightarrow$\crcr
                        \noalign{\nointerlineskip}$\rightarrow$\crcr
                        \noalign{\nointerlineskip}$\rightarrow$\crcr
                }%
        }}%
}

\title{Tate-valued  Characteristic Classes}         % Enter your title between curly bracse
\author{Shachar Carmeli and Kiran Luecke}        % Enter your name between curly braces
\date{\today}          % Enter your date or \today between curly braces
\maketitle

\begin{abstract}
We define a projective variant of classical complex orientation theory. Using this, we construct a map of spectra which lifts the total Chern class, providing an alternative answer to an old question of Segal \cite{segal}, previously answered by Lawson et al \cite{lawsonetal}. We also lift and generalize the ``sharp'' construction of Ando-French-Ganter \cite{afg} to an operation on arbitrary $\EE_\infty$-complex orientations, thereby providing a rich source of new $\EE_\infty$-orientations for commutative ring spectra. In particular we give an $\EE_\infty$-lift of the Jacobi orientation, a generalization of the much-studied two variable elliptic genus. Finally, we construct some new complex orientations of periodic ring spectra as requested in \cite{hahnyuan}.
\end{abstract}

\tableofcontents

\section{Introduction}

\subsection{Background and main results}

Let $\omega: \MU\to R$ be a complex oriented ring spectrum. A common method for studying this complex orientation is as follows. First, one finds a ring map $f:R\to R'$, together with a simpler orientation $\omega':\MU\to R'$. Then $R'$ has two complex orientations, $f_*\omega $ and $\omega'$, and by comparing these one gains information about $\omega$. Perhaps the most commonly deployed instance of this produces the \emph{Hirzebruch series}: one takes $f$ to be the rationalization map $f:R\to R\otimes\Q$ and $\omega'$ the orientation $\omega_\text{add}:\MU\to \Q\to R\otimes \Q$ whose formal group law is the additive one. Each of these orientations of $R\otimes \Q$ produces an element in $R_*[[x]]\otimes \Q$, and their ratio is precisely the classical Hirzebruch series of $\omega$. By construction this series is an element in $(R_*[[x]]\otimes \Q)^\times$, so via the splitting principle it produces a stable exponential characteristic class, i.e. a map of $H$-spaces $\BU\to GL_1(R\otimes \Q)$, which we will call the \emph{Hirzebruch characteristic class}.

In this paper we will carry out a similar program, but replacing the map $R\to R\otimes \Q$ with the much less destructive Tate construction $R\to R^{t\T}$, where $\T\subset\C$ is the topological group of unit complex numbers. In particular, we will define a variant of complex orientation theory (cf. \Cref{defn_proj_or}) in which $R^{t\T}$ has a canonical orientation $\ortate$ which plays the role of $\omega_\text{add}$ in the method described above. We will call the resulting series (resp. exponential characteristic class) the \emph{Euler-Tate} series (resp. class).  Crucially, our construction will be carried out at the level of symmetric monoidal categories, and thus automatically produces $\EE_\infty$-\emph{lifts} of the characteristic classes in question, as opposed to mere maps of $H$-spaces. Our first main result is the following. 

\begin{introthm} (\ref{prop_char_series_tch}, \ref{cor:coherent_chern})
    Let $\omega: \MU\to R$ be an $\EE_\infty$-ring map, with associated formal group law $F$, and orientation parameter $t\in R^2\CP^\infty$. Note that $\pi_*R^{t\T}\simeq R_*((t))$. Then there is an associated ``Euler-Tate'' characteristic class 
    \[\tch:\bu\to \glone(R^{t\T})\] whose characteristic series is $t^{-1}(x+_F t)$. In particular, when $R=\Z$ the underlying map of spaces is the total Chern class with respect to $t^{-1}$.
\end{introthm}

This answers a question of Graeme Segal from 1975 \cite{segal}, which asks if such an $\EE_\infty$-lift of the total Chern class exists. The question was answered in the affirmative by Lawson et al \cite{lawsonetal}, using their theory of `algebraic cycles' to construct infinite loop spaces out of geometric objects.

Recall that $\MU\l n\r$ is defined to be the Thom spectrum of the $j$-homomorphism restricted to the $n$-th connective cover of $\ku$. For example, $\MU\l 0\r=\mathrm{MUP}$, $\MU\l2\r=\MU$, and $\MU\l 4\r=\MSU$. The question of when an $\MU\l n\r$ orientation, considered as a homotopy ring map $\MU\l n \r\to R$, admits an $\EE_\infty$-lift is a difficult problem of great interest (\cite{andocoherence}, \cite{ahr}, \cite{hahnyuan}, \cite{hoplaw}). The number of known examples of $\EE_\infty$ orientations is quite small. Our next result gives a big supply of them:  a \emph{lift} and \emph{generalization} of the Ando-French-Ganter construction of \cite{afg}. Their construction turns homotopy $\MU\l 2n\r$-orientations of an even periodic ring spectrum $E$ into homotopy $\MU\l2n-2\r$-orientations of $E^{t\T}$, when $n=0,1,2,3$. Our construction produces the following; we refer the reader to \Cref{section_sharp} for details.

\begin{introthm}(\ref{thm_sharp_is_sharp})
     Let $\omega: \MU\l 2n\r\to R$ be an $\EE_\infty$-ring map. Then there exists and $\EE_\infty$-ring map
     \[(\omega)^\#:\MU\l2n-2\r\to R^{t\T}\]
     such that when $n=0,1,2,3$ and $R$ is even periodic, this is an $\EE_\infty$-lift of the Ando-French-Gander construction.
    %Let $E$ be an even periodic $\EE_\infty$-ring spectrum with an $\EE_\infty$-orientation $\omega: \MU\l 2(p+1)\r\to E$ for $p\in \{0,1,2,3\}$. Then the $\EE_\infty$ orientation $(\omega)^\sharp \colon \MU\l 2p\r\to E^{t\T}$ from \Cref{constr_sharp} is an $\EE_\infty$ lift of the AFG-sharp orientation $(\omega)^{\mathrm{afg}\#}$ of \Cref{defn_afg_sharp}.
\end{introthm}

Our third main result is an immediate corollary, gotten by specializing to the case $R=\mathrm{tmf}$ with its $\EE_\infty$ ``string'' orientation. We obtain an $\EE_\infty$-lift of the Jacobi orientation of \cite{afg}, which is a generalization of the much-studied ``two-variable elliptic genus'' (cf. \cite{2varlib}, \cite{2vardijk}, \cite{2vareguchi}, \cite{2varhirzell}, \cite{2varkrichever},  \cite{2varwitten}).

\begin{introcor} (\ref{cor_jacobi_is_comm})
    The Jacobi orientation
    \[\MSU\to \mathrm{tmf}^{t\T}\]
    admits an $\EE_\infty$-lift.
\end{introcor}

Our final main result is an answer to two specific questions posed by Hahn-Yuan in \cite{hahnyuan}, which were aimed at advancing the program of understanding $\EE_\infty$ orientation theory. We refer the reader to \Cref{section_surprise_or} for more background and details.

\begin{introthm}(\ref{thm_both_MUP}, \ref{cor_forms_of_periodic}) Consider the three `forms of periodic complex bordism' given by $\MU^{t\T}$, $\MUP$, and $\MUP_\mathrm{snaith}$. Then $\MU^{t\T}$ admits $\EE_\infty$-orientations by both $\MUP$ and $\MUP_\mathrm{snaith}$. Therefore, $\Z^{t\T}$, a `form of periodic integral cohomology,' admits $\EE_\infty$-orientations by all three. 
    
\end{introthm}

\subsection{Conventions}

\begin{itemize}
\item For two spectra $X$ and $R$, we denote by $R(X)$ the spectrum of maps from $X$ to $R$ (which is usually denoted by $R^X$), i.e. the $R$-cochains on $X$.

\item We shall use the following notation for action of continuous maps on homotopies: Let $A,B$ be objects of an $\infty$-category $\cC$. If $h$ is a path in $\Map(A,B)$ (i.e. a homotopy between maps $A\to B$), then for $f\colon A' \to A$ we denote the image of $h$ under the map $f^*\colon \Map(A,B) \to \Map(A',B)$ by $h f$ and if $g\colon B \to B'$ is a map we shall denote the image of $h$ under $g_*\colon \Map(A,B) \to \Map(A,B')$ by $gh$.

%\item We use the adjectives ``commutative'' and ``$\EE_\infty$'' synonymously; in each instance our choice is completely aesthetic. 

\item Outside of \Cref{section_recollection}, orientations will always be implicitly $\EE_\infty$ unless otherwise stated.

\item For a commutative ring spectrum $R$, we denote by $\Perf(R)$ the $\infty$-category of perfect $R$-modules, i.e. these that can be built from $R$ by finite colimits, retracts, and desuspensions. 
In particular, $\Perf(\Sph)$ is the $\infty$-category of finite spectra. 

\item We let $\cat$ be the $\infty$-category of small $\infty$-categories and $\cat^\perf$ the $\infty$-category of small idempotent complete stable $\infty$-categories. All stable $\infty$-categories in this paper are idempotent complete by default. 
\end{itemize}

\subsection{Acknowledgments} We would like to thank: Eric Peterson for valuable feedback, and particularly for pointing out the work of Ando-French-Ganter \cite{afg}; Constantin Teleman for pointing out the work of Lawson et al \cite{lawsonetal}; Mark Behrens and Chris Schommer-Pries for a helpful discussion regarding \Cref{section_surprise_or}; and Yigal Kamel, Akhil Mathew, Charles Rezk, and Lior Yanovski for helpful conversations. 

S.C. is supported by a research grant from the Center for New Scientists at the Weizmann Institute of Science, and would like to thank the Azrieli Foundation for their support through an Early Career Faculty Fellowship.

\section{Recollection of complex orientations and characteristic classes}\label{section_recollection}

In this section we recall the basics of complex orientation theory, mainly to fix notation and conventions.

\subsection{Complex orientations}

\begin{defn}\label{defn_cx_or}
    Let $R$ be a homotopy ring spectrum. A \emph{complex orientation} of $R$ is one of the following equivalent pieces of data:
    \begin{enumerate}
        \item a homotopy ring map $\MU\to R$
        \item a factorization of the unit $\S\to R$ through the bottom cell $\S\to\Sigma^{\infty-2}\CP^\infty$
        \item a class $t\in R^2\CP^\infty$ which restricts to $\Sigma^21 \in R^2 S^2$ along $S^2 \simeq \CP^1\to\CP^\infty$ and induces an isomorphism $\pi_{-*}R[[t]]\to R^*\CP^\infty$.
        \item a functorial and additive\footnote{That is, $\theta_V\theta_W=\theta_{V\oplus W}$.} assignment of ``Thom'' classes $\theta_V\in R^n(\S^V)$ for every complex vector bundle $V\to X$ such that multiplication by $\theta_V$ induces an isomorphism $R^*X\simeq R^{*+n}(\S^V)$, where $\S^V$ denotes the Thom space of $V$.
    \end{enumerate}
    We shall refer to the class $t$ from Item 3 as the \emph{parameter} of the complex orientation, and to the isomorphisms of Item 4 as the \emph{Thom isomorphism} associated with the orientation.
    Note that by Item 3 and the K{\"u}nneth formula, $R^*(\CP^\infty \times \CP^\infty)\simeq \pi_{-*}R [[x,y]]$ where $x$ and $y$ are the pullbacks of $t$ along the two projections. Let $\mu:\CP^\infty \times \CP^\infty\to\CP^\infty$ be the multiplication map, which classifies the tensor product of line bundles. The \emph{formal group law} $F_R(x,y)$ is defined to be the power series associated to the element $\mu^*t$. When $R$ is clear from the context, we denote it by $F(x,y)$. 
\end{defn}

\begin{rem}\label{rem_Einfty_or}
    The majority of this paper is concerned with the much more structured notion of an $\EE_\infty$ complex orientation. By definition these are maps $\MU\to R$ of $\EE_\infty$-ring spectra. This is of course only defined when $R$ is an $\EE_\infty$-ring spectrum. Given item 1, there is an evident forgetful map from $\EE_\infty$ complex orientations to complex orientations.
\end{rem}

Let $R$ be a complex oriented homotopy ring spectrum, and let $V\to X$ be a complex vector bundle. Consider the associated projective bundle $\P(V)$, together with the map $\phi \colon \P(V)\to \CP^\infty$ classifying the tautological bundle on it. More precisely, if $L\to \CP^\infty$ is the tautological line bundle, then $\phi$ is the map such that $\phi^*L$ is the canonical one-dimensional sub-bundle of $p^*V$, where $p\colon \P(V) \to X$ is the projection. 

The class $t\in R^2\CP^\infty$ in Item 3 of \Cref{defn_cx_or} pulls back to a class $x = \phi^*t\in R^{2}\P(V)$ which produces (via e.g. the Leray-Hirsch theorem) an isomorphism 
\emph{of $R^*X$-modules}
\[
R^*\P(V)\simeq \bigoplus_{k=0}^{n-1} R^*(X)\cdot x^k.
\]
However, $x^{n}$ need not be zero, and its failure to be so is measured by characteristic classes. 

\begin{defn}\label{defn_chern}
    Let $R$ be a complex oriented homotopy ring spectrum, let $V\to X$ a vector bundle of dimension $n$, and let $x\in R^2\P(V)$ be defined as above. Define the \emph{Chern classes} 
    \[
    c_i(V)\in R^{2i}(X)
    \]
    by declaring $c_0(V)=1$, and then for $i\geq 1$, defining $c_i(V)$ to be the unique elements such that the equation
    \[0=\sum_{i=0}^{n}(-1)^ic_i(V)x^{n-i}\]
    holds in $R^*\P(V)\simeq\bigoplus_{k=0}^{n-1} R^*(X)\cdot x^k.$

    For a parameter $z$ of cohomological degree -2, define the \emph{total Chern class with respect to $z$}, $c(V,z)\in R^*(X)[[z]]$ to be the power series
    \[
    c(V,z):=\sum_{i=0}^nc_i(V)z^i.
    \]
\end{defn}

\begin{defn}\label{defn_Euler_class}
Let $R$ be a complex oriented homotopy ring spectrum and $V\to X$ a vector bundle of rank $n$. Define the \emph{Euler class}
\[e(V)\in R^n(X)\]
to be the pullback of $\theta_V\in R^n(\S^V)$ (Item 3 \Cref{defn_cx_or}) along the zero section $a_V\colon X\to \S^V$. It is a standard fact that this agrees with the $n$-th Chern class $c_n(V)$ of \Cref{defn_chern}.
\end{defn}

\begin{rem}\label{rem_line_Euler}
    Note that when $V\to X$ is a line bundle, classified by a map $\xi_V:X\to\CP^\infty$, then 
    \[
    e(V)=c_1(V)=\xi_V^*t\in R^2(X),
    \]
    where $t$ is as in \Cref{defn_cx_or}.
\end{rem}

\subsection{Characteristic series and characteristic classes}
Using the theory of Chern classes, one can classify all ``exponential'' characteristic classes valued in a complex oriented ring spectrum.
\begin{defn}
Let $R$ be a homotopy ring spectrum. An $R$-valued \emph{exponential characteristic class} is a map of $H$-spaces 
\[
c\colon \BU\to \GL_1(R).
\]
If $R$ is an $\EE_\infty$-ring spectrum, an \emph{$\EE_\infty$ exponential characteristic class} is a map of spectra 
\[
c\colon \bu \to \glone(R).
\]
\end{defn}
Thus, an exponential characteristic class assigns to every (rank zero, virtual) vector bundle $V\to X$ a class $c(V)\in R^0(X)^\times$, natural in maps of spaces and satisfying $c(V\oplus U) = c(V)c(U)$. We extend it to virtual vector bundles of non-zero rank by the convention that $c(\CC^n) = 1$ in $R^0(\pt)$. 

Assume that $R$ is complex oriented, with corresponding parameter $t\in R^2\CP^\infty$.
Given a characteristic class $c\colon \BU\to \GL_1(R)$, we can evaluate it on the tautological line bundle $L\to \CP^\infty$ to get a class 
\[
f(t) := c(L)\in R^0(\CP^\infty) \iso \pi_0(R[[t]]). 
\]
Since $c(\CC) = 1$ and $L|_{\pt} \simeq \CC$, we see that $f(0) = 1$. 
\begin{prop}[Splitting Principle]\label{splitting_princ}
Let $R$ be a complex oriented ring spectrum. 
Then, the association 
\[
c\mapsto f(t):= c(L)
\]
induces an isomorphism between the abelian group of isomorphism classes of $R$-valued exponential characteristic classes and the abelian group of power series $f(x) = 1 + a_1 x + a_2 x^2 + \dots$ with $a_k \in \pi_{2k} R$, endowed with the multiplicative group structure. 
\end{prop}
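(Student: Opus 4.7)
The map $c \mapsto c(L)$ is clearly a homomorphism of abelian groups, since pointwise multiplication of maps of $H$-spaces $\BU \to \GL_1(R)$ restricts to pointwise multiplication of the resulting units $c(L) \in R^0(\CP^\infty)^\times$. The plan is to establish bijectivity by constructing an explicit inverse via Chern roots (surjectivity) and invoking the classical splitting principle in complex-oriented cohomology (injectivity).

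For surjectivity, given a power series $f(x) = 1 + a_1 x + a_2 x^2 + \cdots$ with $a_k \in \pi_{2k} R$, I would build a candidate class $c_f$ on $\BU(n)$ by the Chern-root formula
\[
c_f(V_n) := \prod_{i=1}^n f(\xi_i),
\]
where $V_n$ is the tautological $n$-plane bundle and $\xi_i \in R^2((\CP^\infty)^n)$ are the Chern roots obtained by pullback along the maximal torus inclusion. By the complex orientation, $R^*(\BU(n)) \simeq R_*[[c_1, \ldots, c_n]]$, and this embeds into $R_*[[\xi_1, \ldots, \xi_n]]$ as the subring of symmetric power series (splitting principle). Since $\prod_i f(\xi_i)$ is symmetric, it defines an element of $R^0(\BU(n))$, which is a unit because $f(0) = 1$. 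This same normalization ensures compatibility under $\BU(n) \hookrightarrow \BU(n+1)$ (which adds a trivial line summand, contributing a factor $f(0) = 1$), so the classes assemble into a map $c_f \colon \BU \to \GL_1(R)$. Whitney-sum multiplicativity $c_f(V \oplus W) = c_f(V) c_f(W)$ is immediate from the product form, since the Chern roots of a direct sum are the union of those of the summands. Evaluating on $\BU(1) = \CP^\infty$ gives $c_f(L) = f(t)$.

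For injectivity, suppose $c_1, c_2$ are exponential classes with $c_1(L) = c_2(L)$. By multiplicativity they agree on every direct sum of line bundles, in particular on the universal sum over $(\CP^\infty)^n$ for each $n$. The splitting principle, which follows by iterating the Leray--Hirsch identification of $R^*(\P(V))$ recalled just above \Cref{defn_chern}, ensures the restriction $R^*(\BU(n)) \to R^*((\CP^\infty)^n)$ is injective. Hence $c_1 = c_2$ on each $\BU(n)$, and therefore on $\BU = \colim_n \BU(n)$.

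The main subtlety I anticipate is not the bijection of underlying cohomology classes but verifying that the construction $c_f$ is genuinely an $H$-space map up to $H$-space homotopy, i.e.\ that compatibility with Whitney sum is coherent and not merely a pointwise identity at each $n$. This rests on the additivity of Chern roots under direct sum together with the strict multiplicativity of the formula $\prod_i f(\xi_i)$ in those roots; once this is in hand, the rest is bookkeeping.
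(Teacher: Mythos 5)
Your argument is correct and is essentially the standard one the paper has in mind: the paper states this as the classical splitting principle without proof, and your Chern-root construction of the inverse is exactly what is recorded in \Cref{rem:splitting_explicit} (expanding $\prod_i f(\xi_i)$ in elementary symmetric polynomials). The coherence subtlety you flag is harmless here since the proposition only concerns isomorphism classes, for which the cohomological identity $\mu^*c_f = c_f\times c_f$ together with the injectivity of restriction to maximal tori suffices.
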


\begin{defn}\label{f_chern_class}
Let $R$ be a complex oriented ring spectrum with associated parameter $t\in R^2 \CP^\infty$. 
We denote the inverse of the above association by 
\[
f\mapsto c^f,
\]
taking a power series $f(x) = 1 + a_1 x + a_2 x^2 + \dots$ to the unique $R$-valued exponential characteristic class $c^f$ satisfying $c^f(L) = f(t) \in R^0\CP^\infty$. 
\end{defn} 

\begin{rem} \label{rem:splitting_explicit}
One can explicitly recover $c^f$ from $f$. Namely, let $V\to X$ be a vector bundle of rank $n$ over a space $X$. We can expand the multivariate power series $\prod_{i=1}^n f(x_i)$ uniquely as a power series in the elementary symmetric polynomials 
\[
\prod_{i=1}^n f(x_i) = G(\sigma_1,\sigma_2,\dots,\sigma_n)
\]
where 
\[
\sigma_k = \sum_{I\subseteq [n], |I| = k} \prod_{i\in I} x_i. 
\]
Then, using the splitting principle one easily shows that 
\[
c^f(V) = G(c_1(V),\dots,c_n(V)).
\]
\end{rem}

\begin{rem}
Note that $c^f$ depends on the complex orientation of $R$ through the choice of parameter $t\in R^2\CP^\infty$ and not only on the abstract power series $f(x)$. 
\end{rem}

%\begin{defn}\label{defn_symm_fun_chern}
\begin{example}\label{exmp_chern_series_for_Z} 
Let $R$ be a complex oriented ring spectrum, and consider the $R$-algebra $R[z]$ with $|z| = 2$. Then, the $\pi_*R[z]$-valued power series $f(x) = 1 + zx$ defines a characteristic class 
\[
c^f\colon \BU \to \GL_1(R[z]). 
\]
In this case, the corresponding multivariate power series is 
\[
G(\sigma_1,\dots,\sigma_n) = 1 + \sigma_1 u + \dots + \sigma_n z^n
\]
and hence 
\[
c^f(V) = \sum_k c_k(V)z^k = c(V;z)
\]
is the total Chern class from \Cref{defn_chern}.
\end{example}

\begin{example}\label{Euler_tate_series} (cf. \Cref{section_sharp})
Consider now $R((z^{-1}))$ with $|z| = 2$, let $F(x,y)$ be the formal group law associated with the complex orientation of $R$ and let 
\[
f(x) = z(x +_F z^{-1}) = 1 + xz + \dots.
\]
We obtain a canonical characteristic class 
\[
c^f\colon \BU\to \GL_1(R((z^{-1}))).
\] 
\end{example}

The characteristic class in the \Cref{Euler_tate_series} is the main object of study in this paper. Observe that when $F$ is the additive formal group law, it agrees with the extension of scalars of the total Chern class along the canonical map of homotopy ring spectra $R[z]\to R((z^{-1}))$. 

\subsection{Characteristic classes and quotients of orientations}
Exponential characteristic classes and complex orientations of a complex oriented ring spectrum $R$ are closely related. 
\begin{defn}\label{def_plain_or_divis}
Given two complex orientations $\omega$ and $\omega'$ of a homotopy ring spectrum $R$ with associated Thom isomorphisms $\theta$ and $\theta'$, we obtain an exponential characteristic class 
\[
\frac{\omega'}{\omega}\colon \BU \to \GL_1(R) 
\]
taking a vector bundle $V\to X$ to the 
 image of $1\in R^0(X)$ under the composite  
\[
R(X) \oto{\theta_{V}'} R(\Th(V)) \oto{\theta_V^{-1}} R(X). 
\]  
\end{defn}

When $R$ is an $\EE_\infty$-ring spectrum with $\EE_\infty$-orientations (cf. \Cref{rem_Einfty_or}) $\omega$ and $\omega'$, the quotient refines to a map $\bu \to \glone(R)$ that we denote again by $\frac{\omega'}{\omega}$. It can be defined alternatively as the mate of the composite 
\[
\Sph[\BU] \to \MU[\BU]\simeq \MU\otimes \MU \oto{\omega' \otimes \omega} R
\]
under the adjunction
\[
\Sph[-]\colon \Sp^\cn \adj \calg(\Sp) : \glone . 
\]

\section{The Euler orientation}\label{eul_or_cats}

\subsection{Orientations of categories}

%For $\FF= \CC$ or $\RR$, 
Let $\J$ be the $\infty$-category associated with the topological category of complex vector spaces and injective linear maps, so that 
\[
\J^\simeq \simeq \bigsqcup_n B\GL_n(\C).
\]
Endow $\J$ with the symmetric monoidal structure given by the direct sum of vector spaces. 
Then there is a symmetric monoidal functor $\Sph^{(-)}\colon \J\to \Perf(\Sph)$ taking a vector space $V$ to the stable one-point compactification $\Sph^V$. Moreover, $\J^\simeq$ acquires the structure of a commutative monoid in spaces.
Let 
\[
    \ku := (\J^\simeq)^{\gp}
\] 
be its group completion, which is the connective topological $K$-theory spectrum of $\C$. 
%Hence, if $\FF=\RR$ we have $\ku = \ko$ and if $\FF=\RR$ we have $\ku = \ku$. 
Denote by
$
\bu
$
the connected cover of $\ku$. By Bott periodicity, we have $\bu \simeq \Sigma^2\ku$.

The functor $\Sph^{(-)}$ restricts to a map $\J^\simeq \to \pic(\Sph)$ of commutative monoids in spaces. Since the target is group complete, it extends to map from $\ku$. This is the $j$-homomorphism $j\colon \ku \to \pic(\Sph)$. 

We reformulate the concept of an orientation of an $\EE_\infty$-ring spectrum in terms of stable categories. 
\begin{defn}\label{defn_cat_or}
%{\red Kiran: Is there a reason to include the ``factor through $\pi_0$" versions of all the orientation definitions? If not, it may increase readability to just omit it and stick with the ``nullhomotopy" version?} 
Let $\cC$ be a stably symmetric monoidal $\infty$-category. A \emph{complex orientation} of $\cC$ is a factorization of the form  
\[
\xymatrix{
\ku \ar[d]\ar^{j}[r]  & \pic(\Sph)\ar[d] \\
\pi_0\ku  \ar@{..>}[r]\ar@{=>}^{\omega}[ru]   & \pic(\cC)
}.
\]
Equivalently, a complex orientation of $\cC$ is the data of a null-homotopy of the composite 
\[
\bu  \to \ku  \oto{j} \pic(\Sph) \oto{u} \pic(\cC). 
\]
\end{defn} 

We denote the space of complex orientations of $\cC$ by $\Or(\cC)$. These spaces assemble to a functor 
\[
\Or \colon \calg(\cat^\perf) \to \Spc.  
\]
from the $\infty$-category of idempotent complete stably symmetric monoidal $\infty$-categories.

This categorical version of orientation theory is really just a reformulation of the classical $\EE_\infty$-orientation story, as summarized by the following proposition. 
\begin{prop}
For $\cC \in \calg(\cat_\st)$, the following spaces are equivalent. 
\begin{itemize}
\item The space $\Or(\cC)$. 
\item The space of null-homotopies of the composite 
\[
\bu \to \ku \oto{j} \pic(\Sph) \to \pic(\cC). 
\]
\item The space of $\EE_\infty$ complex orientations of the $\EE_\infty$-ring spectrum $\End(\one_\cC)$. 
\end{itemize}
\end{prop}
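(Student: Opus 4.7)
The plan is as follows. The equivalence of items (1) and (2) is immediate from the cofiber sequence $\bu \to \ku \to H\Z$ of spectra, which expresses $\bu$ as the connective cover of $\ku$ (using $\pi_0\ku = \Z$ and $\pi_1\ku = 0$). Applying $\Map_{\Sp}(-,\pic(\cC))$ to this sequence and recalling that the diagram in \Cref{defn_cat_or} consists of a factorization of $\ku \to \pic(\cC)$ through $\pi_0\ku = H\Z$ together with a homotopy $\omega$ to the $j$-homomorphism composite, such data correspond bijectively to null-homotopies of the composite $\bu \to \ku \to \pic(\Sph) \to \pic(\cC)$. This is essentially tautological from the definition.

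For the equivalence of (2) and (3), I would set $R := \End_\cC(\one_\cC)$. The key identifications are $\Omega\pic(\cC) \simeq \Aut_\cC(\one_\cC) = \glone(R)$ and $\Omega\pic(R) = \Omega\pic(\Mod_R) = \glone(R)$ (the latter using $\End_{\Mod_R}(R) = R$); both therefore produce equivalences $\pic(\cC)_{\geq 1} \simeq \Sigma\glone(R) \simeq \pic(R)_{\geq 1}$. Since $\pi_0 \bu = \pi_1 \bu = 0$, any map from $\bu$ into a spectrum $Y$ factors uniquely through $Y_{\geq 1}$ with the same space of null-homotopies; thus the null-homotopy spaces of $\bu \to \pic(\cC)$ and $\bu \to \pic(R)$ are identified. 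Finally, by the universal property of $\MU$ as the Thom spectrum of $j\colon \bu \to \pic(\Sph)$ (in the sense of Ando--Blumberg--Gepner--Hopkins--Rezk), null-homotopies of $\bu \to \pic(\Sph) \to \pic(R)$ constitute the space of $\EE_\infty$-ring maps $\MU \to R$, which is item (3).

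The main obstacle I expect lies in the second step: verifying that, under the identifications $\pic(\cC)_{\geq 1} \simeq \Sigma\glone(R) \simeq \pic(R)_{\geq 1}$, the two natural maps $\pic(\Sph)_{\geq 1} \to \pic(\cC)_{\geq 1}$ and $\pic(\Sph)_{\geq 1} \to \pic(R)_{\geq 1}$ are compatibly identified, both reducing to the suspension of the $\glone$-map induced by the unit ring map $\Sph \to R$. This is a coherence check on how the $\EE_\infty$-structure of $\cC$ at the unit endows $R$ with its $\EE_\infty$-ring structure, and how the unit $\Sph \to \cC$ factors as $\Sph \to R \to \cC$ on underlying spectra. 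Once this compatibility is established, the equivalence of the null-homotopy spaces in (2) and (3) follows formally.
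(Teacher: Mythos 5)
Your proposal is correct and follows essentially the same route as the paper: the cofiber sequence $\bu \to \ku \to \pi_0\ku$ for the first equivalence, and for the second the ABGHR identification of $\EE_\infty$-orientations of $R=\End(\one_\cC)$ with null-homotopies of $\bu \to \pic(\Sph)\to\pic(R)$, combined with the observation that $\pic(R)\to\pic(\cC)$ is $0$-truncated (equivalently, an equivalence on $1$-connective covers) and $\bu$ is connected. The compatibility you flag at the end is exactly what the paper's appeal to the canonical map $\pic(\End(\one_\cC))\to\pic(\cC)$ (induced by the symmetric monoidal functor $\Perf(R)\to\cC$ through which the unit $\Perf(\Sph)\to\cC$ factors) supplies.
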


\begin{proof}
The first and second spaces are equivalent by the cofiber sequence 
\[
\bu \too \ku \too \pi_0 \ku.
\]
For the equivalence between the second and third items, note that a complex orientation of $\End(\one_{\cC})$ is the same data as a null-homotopy of the composite $\bu \to \pic(\Sph) \to \pic(\End(\one_\cC))$ (cf. e.g. \cite{abghr}, \cite{barthelthom}). Thus, the claim follows from the fact that $\bu$ is connected and the map $\pic(\End(\one_\cC)) \to \pic(\cC)$ is $0$-truncated. 
\end{proof}
\subsection{An equivariant version of $\Sph^{(-)}$}

Let $\T$ be the Lie group of unit complex numbers. The grouplike commutative monoid $B\T$, classifying $1$-dimensional vector spaces\footnote{Again, we consider maps between such vector spaces with the topology induced from $\C$.} over $\C$ acts on $\J$ by the tensor product. Via this action, $\J$ is promoted to an object of $\calg(\cat)^{B^2\T}$. Let $e\colon  \pt \to  B^2\T$ be the inclusion of the basepoint. We have an adjunction
\[
e^*\colon \calg(\cat)^{B^2\T} \adj \calg(\cat): e_*
\]
where $e^*$ forgets the action of $B\T$ and      
$
e_*\cC \simeq \cC^{B\T}
$ 
with the translation action of $B\T$. 

\begin{defn}\label{defn_equi_j}
The non-equivariant functor $\Sph^{(-)}\colon e^*\J \to \Perf(\Sph)$
corresponds via the adjunction above to an equivariant functor that we abusively denote again by  
\[
\Sph^{(-)}\colon \J \to e_*\Perf(\Sph) \simeq \Perf(\Sph)^{B\T}.
\]
Explicitly, this functor takes a vector space $V$ to the stable spherical fibration $\Sph^{V\otimes L}$ over $B\T \simeq \CP^\infty$, where $L$ is the tautological bundle. Alternatively, it sends $V$ to the finite-spectrum-with-$\T$-action given by $\S^V$ and its action of $\T$ by scalar multiplication.
\end{defn}

Upon group completion we get a version of the $j$-homomorphism, i.e. a morphism of spectra 
\[
\tilde{j} \colon \ku \to \pic(\Sph)^{B\T} \simeq \pic(\Perf(\Sph)^{B\T}).  
\]

%\begin{rem}
%Let $\rho \colon \TT \to \GL_1(\CC)$ be the standard representation of $\TT$, regarded as a vector bundle over $B\TT$.  
%Unwinding the definitions, naturally in a vector space $V$ over $\CC$ we have 
%\[
%\tilde{j}_\CC(V) = \Sph^{V\otimes \rho} \in \pic(\Perf(\Sph)^{B\TT}). 
%\]
%\end{rem}

\subsection{Projective orientations} 

Using $\tilde{j}$, we shall now define a variant of the categorical orientation theory presented above which takes the action of $B\T$ into account.
\begin{defn}
Let $\cC$ be a stably symmetric monoidal $\infty$-category endowed with an exact, symmetric monoidal functor $\phi\colon \Perf(\Sph)^{B\T} \to\cC$. A \emph{projective complex orientation} of $(\cC,\phi)$ is a factorization of the form  
\[
\xymatrix{
\ku \ar[d]\ar^{\tilde{j}}[r]  & \pic(\Sph)^{B\T}\ar^\phi[d] \\
\pi_0\ku  \ar@{..>}[r] \ar@{=>}^{\omega}[ru]   & \pic(\cC)
}
\]
Equivalently, it is the data of a null-homotopy of the composite 
\[
\bu  \to \ku  \oto{\tilde{j}} \pic(\Sph)^{B\T} \oto{\phi} \pic(\cC). 
\]
\end{defn}
We denote the space of projective complex orientations of $(\cC,\phi)$ by $\Or^\proj(\cC,\phi)$. These assemble to a functor 
\[
\Or^\proj \colon \calg_{\Perf(\Sph)^{B\T}}(\cat^\perf) 
\to \Spc.
\]

\begin{rem}
As for non-equivariant orientations, the notion of a projective orientation is defined in categorical terms but the information can be reduced quite a bit. Namely, since $\bu $ is connected, the data of an equivariant complex orientation of $(\cC,\phi)$ is the same as a nullhomotopy of the map 
\[
\Omega \ku  \oto{\Omega \phi\circ \tilde{j}} \Omega \pic(\cC) \simeq \glone(\one_\cC).
\]
\end{rem}

\begin{rem}
    The key examples we have in mind are such that $\one_\cC = R^{t\T}$ for a commutative ring spectrum $R$ endowed with the trivial $\T$-action. As a first approximation, one may think of the case $\cC = \Mod(R^{t\T})$ with the functor $\Perf(\Sph)^{B\T} \to \Mod(R^{t\T})$ taking $X$ to $(R\otimes X)^{t\T}$. Since this functor is in general not symmetric monoidal, we shall work with a modification of this picture, replacing modules over $R^{t\T}$ with a categorified version of Tate's construction $\Perf(R)^{t\T}$. 
\end{rem}

\begin{rem}
    Just like in the case of ordinary complex orientations, there is a universal object $(\cC_\mathrm{univ}, \phi_\mathrm{univ})$ with a projective orientation: by \cite[Theorem 7.13]{carmeliramzi} we have $\cC_\mathrm{univ}=\mathrm{Mod}_{\mathrm{Sp}^{B\T}}(\mathrm{Th}(\tilde{j}))$ where $\mathrm{Th}(\tilde{j})\in\mathrm{Sp}^{B\T}$ is the Thom spectrum of $\tilde{j}$, and $\phi_\mathrm{univ}$ is the evident base change functor. A projective orientation of $(\cC,\phi)$ is then equivalent to a symmetric monoidal functor (over $\Sp^{B\T}$) $\cC_\mathrm{univ}\to \cC$, which in particular consists of a ring map $\mathrm{Th}(\tilde{j})^{h\T}\to\one_\cC$. Note that the underlying spectrum of $\mathrm{Th}(\tilde{j})$ is $\MU$. This has the somewhat surprising consequence (cf. \Cref{Euler_transf_becomes_iso_after_tate}) that $\MU^{h\T}$ admits a ring map to $\S^{tC_p}$ so that (after $p$-completion) it contains $\S_p$ as a summand. 
    %and one could in principle compute stable homotopy groups from $\MU$ via a homotopy fixed points spectral sequence. 
    We refrain from giving details as we do not pursue this further here.
\end{rem}

\begin{rem}\label{rem:or_torsor}
Just like for non-equivariant orientations, since $\Or^\proj(\cC)$ is the space of nullhomotopies of a fixed map $\bu  \to \pic(\cC)$, it is a torsor over the connective spectrum of maps 
\[
\hom(\bu , \Omega \pic(\cC)) \simeq \hom(\bu , \glone(\one_\cC)).
\] 
We denote the corresponding action of $\hom(\bu , \glone(\one_\cC))$ on $\Or^{\proj}(\cC)$ by 
\[
(\alpha,\omega)\mapsto \alpha \cdot \omega.
\]
\end{rem}

In view of \Cref{rem:or_torsor}, we can define the following.

\begin{defn}\label{defn_division_or} (Compare with \ref{def_plain_or_divis})
Let $\cC \in \calg_{\Sp^{B\T}}(\cat^\perf)$.  
Given two projective complex orientations $\omega,\omega' \in \Or(\cC)$, their \emph{quotient} 
\[
\frac{\omega'}{\omega}\colon \bu  \to \glone(\one_\cC)
\] 
is the unique (up to a contractible space of choices) map such that 
\[
\frac{\omega'}{\omega} \cdot \omega \simeq  \omega'.
\]
\end{defn}

Finally, we show how to turn non-equivariant orientations into equivariant ones.

\begin{defn}\label{defn_proj_or}  
Let $\cC\in \calg(\cat^\perf)$ and  write $u:\pic(\Sph) \to \pic(\cC)$ for the unit map. 
The adjunction equivalence 
\[
\Map(\bu,\pic(\cC))\simeq \Map_{B\T}(\bu,\pic(\cC)^{B\T}) 
\]
carries the map $u\circ j$ to the map $u^{B\T}\circ \tilde{j}$, and hence induces a map on orientation spaces 
\[
(-)^\proj\colon \Or(\cC) \to \Or_{\CC}^\proj(\cC^{B\T},u^{B\T}).
\]
This map takes an orientation $\omega$ to the 
%($B\T$-equivariant) 
projective orientation $\omega^\proj$ described as follows. The fixed point functor sends the nullhomotopy $\omega$ to a nullhomotopy $\omega^{B\T}$ of 
\[\bu^{B\T} \oto{j^{B\T}} \pic(\Sph)^{B\T} \oto{u^{B\T}} \pic(\cC)^{B\T}.
\]
Then $\omega^\proj$ is defined to be the induced nullhomotopy of the composite
\[
\bu \to \bu^{B\T} \oto{j^{B\T}} \pic(\Sph)^{B\T} \oto{u^{B\T}} \pic(\cC)^{B\T},
\]
where the first map is the mate of the action map $B\T \otimes \bu \to \bu$ induced from the action of $B\T$ on $\J^\simeq$ by passing to group completion and connected cover:
\[
\omega^{\proj}:\xymatrix{
\bu\ar[d] \ar@/^2.0pc/^{\tilde{j}}[rr] \ar[r]& \ku^{B\T}\ar[d] \ar^{j^{B\T}}[r] &  \pic(\Sph)^{B\T} \ar[d]   \\ 
0 \ar@{=}[r]\ar@{=}[ru] & 0\ar@{=>}^{\omega^{B\T}}[ru]\ar[r] &  \pic(\cC)^{B\T} 
}.
\]
\end{defn}

%\begin{rem}
%Let $\pi\colon B\T \to \pt$ be the terminal map, inducing a morphism of connective spectra $\hom(\bu, \glone(\one_\cC)) \to \hom(\bu, \glone(\one_\cC)^{\bu})$. 
%Then the map $(-)^{\proj}$ defined above intertwines the action of $\hom(\bu,\glone(\one_\cC))$ on $\Or_\C(\cC)$ with the action of $\hom(\bu,\glone(\one_{\cC})^{B\T})$ on $\Or_\C^{B\T}$: 
%\[
%\pi^*\alpha\cdot \omega^\proj \simeq (\alpha \cdot \omega)^{\proj} \quad \forall \alpha \colon \bu \to \glone(\one_\cC),\quad \omega \in \Or_\C(\cC).  
%\]
%\end{rem}

\subsection{The Euler map}
Endow the $\infty$-category $\Fun(\J,\Perf(\Sph)^{B\T})$ with the Day convolution symmetric monoidal structure, so that commutative algebras in it are lax symmetric monoidal functors. Since the unit of $\J$ is initial, the constant functor $\cnst{\Sph}\colon \J \to \Perf(\Sph)^{B\T}$ which takes all the objects of $\J$ to the sphere spectrum with constant $\T$-action is the unit of $\Fun(\J,\Perf(\Sph)^{B\T})$. 

\begin{defn}
Since $\cnst{\Sph}$ is the unit of $\Fun(\J,\Perf(\Sph)^{B\T})$, there is a unique  natural transformation
\[
a_{(-)}\colon \cnst{\Sph} \to \Sph^{(-)}.
\]
of symmetric monoidal functors. 
We refer to $a_{(-)}$ as the \emph{Euler transformation}. 
For $V\in \J$, the $\T$-equivariant map $a_V\colon \Sph \to \Sph^V$ is obtained by applying $\Sph^{(-)}$ to the inclusion $\{0\}\into V$. 
\end{defn}

%\subsection{The Tate Construction and Inverting $a_{(-)}$} 
Next, we discuss a categorical variant of the Tate fixed points functor for $\T$.
\begin{prop}
Let $\cC \in \cat^\perf$. 
There is a fully faithful embedding 
\[
\Nm_\T \colon \cC[B\T] \into \cC^{B\T} 
\]
from the constant $B\T$-shaped colimit to the constant $B\T$-shaped limit in $\cat^\perf$, with essential image the thick subcategory of $\cC^{B\T}$ generated by the induced local systems, i.e., the objects of the form $X\otimes \T$ for $X\in \cC$ (and with the action coming from the translation action of $\T$ on itself). If $\cC$ is stably symmetric monoidal then $\Nm_\T$ is the inclusion of a thick tensor ideal, so that the quotient 
\[
\cC^{t\T} := \cC^{B\T} / \cC[B\T]
\]
admits a canonical commutative $\cC^{B\T}$-algebra structure in $\cat^\perf$. In this case, $\End(\one_{\cC^{t\T}}) \simeq \End(\one_\cC)^{t\T}$, where the right hand side is the classical Tate fixed points construction on $\End(\one_\cC)$.  
\end{prop}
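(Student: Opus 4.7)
The plan is to construct $\Nm_\T$ as a categorified norm map, identify its essential image via monadicity, deduce the tensor ideal property from a projection formula, and recover the classical Tate construction at the level of endomorphisms via a Verdier localization sequence.

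For the construction of $\Nm_\T$, I would observe that the induction functor $\mathrm{ind}\colon \cC \to \cC^{B\T}$ sending $X$ to $X\otimes \T$ with translation action is simultaneously left and right adjoint (up to a one-dimensional shift coming from the orientable adjoint representation of $\T$) to the forgetful functor $U\colon \cC^{B\T}\to \cC$. The resulting ambidexterity data furnishes a canonical $B\T$-equivariant enhancement of $\mathrm{ind}$ that factors through the universal $B\T$-shaped colimit in $\cat^\perf$, yielding the desired norm $\Nm_\T\colon \cC[B\T]\to \cC^{B\T}$. This is the categorified incarnation, in $\cat^\perf$, of the classical norm for $\T$-spectra.

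To show $\Nm_\T$ is fully faithful with essential image the thick subcategory $\cC^{B\T}_{\mathrm{ind}}$ generated by induced local systems, I would apply monadicity on both sides. By Barr--Beck--Lurie, $\cC^{B\T}_{\mathrm{ind}}$ is equivalent to modules in $\cC$ over the monad $U\circ\mathrm{ind}\simeq (-)\otimes \T$, while $\cC[B\T]$, computed in $\cat^\perf$ via the Bar resolution of the constant $B\T$-diagram at $\cC$, identifies with modules over the same monad. Matching these two descriptions through $\Nm_\T$ produces the required equivalence. The tensor ideal property then follows from a projection formula: for any $Y\in \cC^{B\T}$ one has $(X\otimes \T)\otimes Y \simeq (X\otimes UY)\otimes \T$, again induced. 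Consequently $\cC[B\T]$ is a thick $\otimes$-ideal, so the Verdier quotient $\cC^{t\T}$ inherits a canonical commutative $\cC^{B\T}$-algebra structure in $\cat^\perf$.

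Finally, for $\End(\one_{\cC^{t\T}})\simeq \End(\one_\cC)^{t\T}$, I would invoke the Verdier localization cofiber sequence
\[
\Map_{\cC^{B\T}}(\one,\,\Nm_\T\one)\longrightarrow \End(\one_{\cC^{B\T}})\longrightarrow \End(\one_{\cC^{t\T}}).
\]
Since $\one_{\cC^{B\T}}$ carries the trivial $\T$-action one has $\End(\one_{\cC^{B\T}})\simeq \End(\one_\cC)^{h\T}$, and via the $(\mathrm{ind},U)$-adjunction the left-hand term becomes $\Map_\cC(\one,\,U\Nm_\T\one)\simeq \End(\one_\cC)_{h\T}$, with the map between them being the usual norm. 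This is precisely the defining cofiber sequence for $\End(\one_\cC)^{t\T}$. The main obstacle I foresee is verifying that the Bar-construction model for $\cC[B\T]$ converges inside $\cat^\perf$ (rather than only in the associated presentable setting) and that the two monadic descriptions genuinely agree; this equivalence is what ultimately underpins both the fully faithful claim and the Tate identification.
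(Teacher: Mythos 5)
Your construction of $\Nm_\T$ via ambidexterity is workable (and the shift $\mathrm{ind}\simeq\Sigma\,\mathrm{coind}$ coming from Atiyah duality for $\T$ is correct), but note that the paper avoids this machinery entirely: it takes the composite of the Yoneda embedding $\cC[B\T]\into\Ind(\cC[B\T])$ with the canonical equivalence $\Ind(\cC[B\T])\simeq\Ind(\cC)^{B\T}$ (colimits of constant diagrams over a space in presentable categories agree with limits), so full faithfulness is automatic and the only thing to check is that the image --- the thick subcategory generated by the induced objects $X\otimes\T$ --- lies in $\cC^{B\T}\subseteq\Ind(\cC)^{B\T}$, which holds because $\T$ is a finite space.

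There are two genuine gaps in your argument. First, the Barr--Beck identification is off: the Eilenberg--Moore category of the monad $U\circ\mathrm{ind}\simeq(-)\otimes\Sigma^\infty_+\T$ (i.e.\ its algebras in $\Ind(\cC)$ with underlying object in $\cC$) is \emph{all} of $\cC^{B\T}$, not the induced part. For $\cC=\Perf(\Sph)$ the trivial local system $\Sph$ is a $\Sph[\T]$-module but does not lie in the thick subcategory generated by induced objects (otherwise $\Sph^{t\T}$ would vanish). What you need on both sides is the thick subcategory generated by the \emph{free} modules, i.e.\ $\cC\otimes\Perf(\Sph[\T])$; with that correction the comparison works, but it is no longer an instance of Barr--Beck--Lurie. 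Second, and more seriously, your localization sequence for $\End(\one_{\cC^{t\T}})$ is incorrect: the fiber of $\End(\one_{\cC^{B\T}})\to\End(\one_{\cC^{t\T}})$ is not $\Map(\one,\Nm_\T\one)$ for the single induced object $\Nm_\T\one=\one\otimes\T$. One computes $\Map_{\cC^{B\T}}(\one,\one\otimes\T)\simeq\Sigma\End(\one_\cC)$ using the $(U,\mathrm{coind})$-adjunction --- your appeal to the $(\mathrm{ind},U)$-adjunction puts $\mathrm{ind}$ in the wrong variable, and in any case $\Map_\cC(\one,U(\one\otimes\T))\simeq\End(\one_\cC)\otimes\Sph[\T]$, which is not $\End(\one_\cC)_{h\T}\simeq\End(\one_\cC)\otimes\Sph[B\T]$. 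The correct fiber, $\Sigma\bigl(\End(\one_\cC)_{h\T}\bigr)$, arises as the mapping spectrum out of $\one$ into its colocalization with respect to the \emph{localizing} (Ind-completed) ideal generated by the induced objects --- a filtered colimit of induced objects, not a single one. Making this precise is exactly the content of Theorem I.4.1(iv) of Nikolaus--Scholze, which is what the paper cites for this step.
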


\begin{proof}
We have an embedding 
\[
\cC[B\T] \into \Ind(\cC[B\T])\simeq \Ind(\cC)^{B\T}. 
\]
To identify its essential image, note that $\cC[B\T]$ is generated from the image of the push-forward functor $\cC \to \cC[B\T]$ along the inclusion $\pt \into B\T$, whose image under $\Nm_\T$ is exactly the induced objects $X\otimes \T$ in $\Ind(\cC)^{B\T}$. In particular, since $\T$ is a finite space, the essential image lies in $\cC^{B\T} \subseteq \Ind(\cC)^{B\T}$.  

When $\cC$ is symmetric monoidal, we have that $Y\otimes (X\otimes \T) \simeq (Y\otimes X) \otimes \T$ for $Y\in \cC^{B\T}$ and $X\in \cC$ (regarding $Y\otimes X$ as an object of $\cC$), showing that $\cC[B\T]$ is a thick tensor ideal. Finally, the determination of the endomorphism object of the unit in $\cC^{t\T}$ follows from the same consideration as in the proof of \cite[Theorem I.4.1 (iv)]{ns}.
\end{proof}

We denote the quotient functor from the proposition above by $T\colon \cC^{B\T} \to \cC^{t\T}$, so that if $\cC$ is symmetric monoidal so is $T$.

\begin{prop}\label{Euler_transf_becomes_iso_after_tate}
The image of the Euler transformation $a_{(-)}$ under the symmetric monoidal functor $T\colon \Perf(\Sph)^{B\T} \to \Perf(\Sph)^{t\T}$ is a natural isomorphism.   
\end{prop}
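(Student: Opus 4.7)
The plan is to verify pointwise in $V \in \J$ that $T(a_V)$ is an equivalence; naturality then comes for free from the naturality of $a_{(-)}$ and the functoriality of $T$, so the componentwise equivalences automatically assemble into a natural isomorphism. Since $T$ is exact, the pointwise statement reduces to showing that $\mathrm{cofib}(a_V)$ lies in the kernel of $T$, which by the preceding proposition is the thick subcategory of $\Perf(\Sph)^{B\T}$ generated by the induced local systems $X\otimes \T$.

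The key reduction is to the case $V = \C$. Since $\Sph^{(-)}\colon \J \to \Perf(\Sph)^{B\T}$ is symmetric monoidal, the Euler transformation is multiplicative, $a_{V\oplus W} \simeq a_V \otimes a_W$. Combined with the symmetric monoidality of $T$ and the fact that tensor products of equivalences are equivalences, the full subcategory of $\J$ on which $T(a_{(-)})$ is an equivalence is closed under direct sums. It contains $V=0$ trivially, where $a_V$ is the identity, and every object of $\J$ is isomorphic to a finite direct sum of copies of $\C$, so it suffices to treat $V=\C$.

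For $V=\C$, the unit sphere is $S(\C) = \T$, and the scalar action of $\T$ on $\C$ restricts to the translation action of $\T$ on itself. Applying $\Sph^{(-)}$ (interpreted via the Thom construction) to the $\T$-equivariant cofiber sequence $S(V)_+ \to D(V)_+ \to \Sph^V$, and using the $\T$-equivariant contraction of the unit disk $D(V)$ onto $\{0\}$ to identify $\Sph[D(V)] \simeq \Sph$ with its map to $\Sph^V$ being $a_V$, one obtains
\[
\mathrm{cofib}(a_\C) \simeq \Sigma \Sph[\T]
\]
in $\Perf(\Sph)^{B\T}$. By the proposition above, $\Sph[\T]$ with its translation action is exactly the basic induced local system $\Sph\otimes \T$, hence lies in $\Perf(\Sph)[B\T] = \ker(T)$. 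Therefore $T(a_\C)$ is an equivalence, as required.

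The only real content of the argument is the cofiber identification $\mathrm{cofib}(a_\C) \simeq \Sigma \Sph[\T]$ together with the recognition of $\Sph[\T]$ with its translation action as a fundamental induced local system. I do not expect a genuine obstacle beyond routine bookkeeping of the equivariant structures on the Thom cofiber sequence.
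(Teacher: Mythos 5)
Your argument is correct. It follows the same basic strategy as the paper's proof — reduce to showing that the (co)fiber of $a_V$ lies in the thick subcategory of induced objects, and identify that (co)fiber with the suspension spectrum of the unit sphere $S(V)$ with its free $\T$-action — but the final step is organized differently. The paper treats arbitrary $V$ at once: $S(V)$ is a finite free $\T$-CW complex, so $\Sigma^\infty_+ S(V)$ lies in the thick subcategory generated by the induced objects $X\otimes\T$. You instead use that $a_{(-)}$ is a transformation of symmetric monoidal functors, so $a_{V\oplus W}\simeq a_V\otimes a_W$, and since $T$ is symmetric monoidal this reduces everything to $V=\C$, where $S(\C)=\T$ with the translation action is literally the generating induced object. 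Your route trades the (standard but unproved) fact that finite free $\T$-CW complexes have induced suspension spectra for the multiplicativity reduction; both are legitimate, and yours is arguably slightly more self-contained given how the kernel of $T$ was described in the preceding proposition. The only point to be careful about is that the equivalence $a_{V\oplus W}\simeq a_V\otimes a_W$ must be taken as part of the coherent data of the monoidal transformation (which it is, since $a_{(-)}$ is the unique map out of the unit in the Day convolution structure), but this is exactly the kind of bookkeeping you flag, and it causes no problem.
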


\begin{proof}
We have to show that for every $V\in \J$ the fiber of the map $a_V \colon \Sph \to \Sph^V$ belongs to the image of $\Nm_\T\colon \Perf(\Sph)[B\T] \to \Perf(\Sph)^{B\T}$. A choice of metric on $V$, with unit sphere denoted by $S(V)$, identifies this fiber as the suspension spectrum $\Sigma^\infty_+S(V)$. But $S(V)$ is a finite $\T$-CW space with free action, so its suspension spectrum belongs to the thick subcategory generated by the induced $\T$-equivariant objects of $\Perf(
\Sph)$.
\end{proof}

We can now use the Euler transformation to define a projective complex orientation of $\Perf(\Sph)^{t\T}$.

\begin{defn}\label{defn_Euler_or} Note that the functor $\cnst{\Sph}\colon \J \to \Perf(\Sph)^{B\T}$, after restricting to $\J^\simeq$, extends to the $0$-map $\ku  \to \pic(\Sph)^{B\T}$. Therefore,
by \Cref{Euler_transf_becomes_iso_after_tate} the Euler transformation $a_{(-)}$ induces a null-homotopy 
\[
Ta_{(-)}\colon 0 \iso T\circ \tilde{j}, 
\]
and in particular a null-homotopy of the restriction to $\bu$:
\[
\xymatrix{
\bu  \ar[d]\ar^{j}[r] &      \pic(\Sph)^{B\T}    \ar^{\wr}[d]     \\
\ku \ar[d] &\ \pic(\Perf(\Sph)^{B\T})\ar^{T}[d]  \\ 
0 \ar[r]\ar^{a_{(-)}}@{=>}[ruu]       &\ \pic(\Perf(\Sph)^{t\T})  
}.
\]
We thus obtain a projective complex orientation of $(\Perf(\Sph)^{t\T}, T)$ that we denote by 
\[
\ortate\in \Or^\proj(\Perf(\Sph)^{t\T}, T).
\]
\end{defn}

\section{Coherent commutativity for characteristic classes}
In this section we use the Euler orientation constructed in the previous section to construct examples of $\EE_\infty$ characteristic classes valued in Tate-fixed points ring spectra $R^{t\T}$.

\subsection{The Tate-valued Chern class}
%Again, letting $\FF = \RR$ or $\CC$, f
From now on we fix a complex oriented stably symmetric monoidal $\infty$-category $\cC \in \calg(\cat^\perf)$. The most important example is $\cC = \Perf(R)$ for a complex oriented commutative ring spectrum $R$.  
%\footnote{Recall that our convention outside of \Cref{section_recollection} is that orientations are implicitly $\EE_\infty$.} 
%$\EE_\infty$-ring spectrum $R$ with unit map $u:\S\to R$. 
Let $u\colon \Perf(\Sph) \to \cC$ be the unit functor, and consider the functor $u^{B\T} \colon \Perf(\Sph)^{B\T} \to \cC^{B\T}$. 
%Write $u^{B\T} :\Perf(\Sph)^{B\T} \to \Perf(\Sph)^{B\T}$ for the map induced by the unit $\S\to R$. Denote by $\omega\in \Or(R)$ its chosen complex orientation. 
From $\omega$ we can now obtain a projective orientation (\Cref{defn_proj_or}) 
\[
\omega^\proj \in \Or^{\proj}(\Perf(R)^{B\T}, u^{B\T}).
\]
This orientation can be pushed forward along the Tate-quotient functor $T\colon \Perf(R)^{B\T}\to \cC^{t\T}$ to obtain the projective orientation 
\[
T(\omega^\proj)\in \Or^{\proj}(\cC^{t\T}, T\circ u^{B\T}). 
\]
On the other hand, we can push $\ortate$ (\Cref{defn_Euler_or}) along the evident functor $u^{t\T}:\Perf(\Sph)^{t\T} \to \cC^{t\T}$ to obtain a \emph{second} projective orientation
\[
u^{t\T}(\ortate) \in \Or^{\proj}(\cC^{t\T}, T\circ u^{B\T}). 
\]
The situation is summarized in the following diagram 
\begin{equation} \label{diag:two_orientations}
\xymatrix{ 
\ku\ar^{\tilde{j}}[rd]\ar[dd] \ar[rr]  & &   \pi_0\ku\ar[d] \ar@{=>}_{\ortate}[ld] \\     
      & \pic(\Sph)^{B\T}\ar^{u^{B\T}}[d]\ar_T[r] & \pic(\Perf(\Sph)^{t\T})\ar^{u^{t\T}}[d] \\
 \pi_0\ku \ar[r] \ar@{=>}^{\omega^\proj}[ru]    & \pic(\cC)^{B\T} \ar_T[r]   & \pic(\cC^{t\T})
}
\end{equation}

\begin{defn}\label{defn_tch}
Let $R$ be an $\EE_\infty$-ring spectrum with $\EE_\infty$-complex orientation $\omega$. 
We define the \emph{Euler-Tate characteristic class} to be the the quotient (\Cref{defn_division_or}) of these two projective orientations: 
\[
\tch := \frac{T(\omega^\proj)}{u^{t\T}(\ortate)}\colon \bu \to \glone(\one_{\cC^{t\T}}). 
\]
\end{defn}

Let $R = \End(\one_\cC)$, so that $\End(\one_{\cC^{t\T}}) = R^{t\T}$.  The Euler-Tate characteristic class is an exponential characteristic class that is canonically $\EE_\infty$-multiplicative, as it is given as a map of spectra (not just of $H$-spaces, for example). Now, being an exponential $R^{t\T}$-valued characteristic class, it has a characteristic series $f(x) \in \pi_*R^{t\T}[[x]]$. Our next goal is to identify this characteristic series, and deduce that certain key examples of exponential characteristic classes admit canonical $\EE_\infty$-refinements. First, we identify the value of $\tch$ on an arbitrary vector bundle. Note that $\pi_*R^{t\T} \simeq \pi_*R((t))$ as an $\pi_*R^{B\T} \simeq \pi_*R[[t]]$-algebra for $t\in R^2B\T$ the parameter associated with the complex orientation of $R$.  

From now on we only consider orientations of commutative ring spectra for clarity, but the results translate immediately to the more general setting of categories using the functors $\Perf(\End(\one_\cC)) \to \cC$. 

\begin{prop}\label{prop:Euler_Tate_on_vb}
Let $R$ be an $\EE_\infty$-ring spectrum with $\EE_\infty$-complex orientation $\omega$. Let $V\to X$ be a complex vector bundle of dimension $n$ over a space $X$. Consider the vector bundle $V\otimes L$ on $X\times \CP^\infty \simeq X\times B\T$. Then    
\[
\tch(V) = \frac{e(V\otimes L)}{e(L)^{n}} \quad \in \quad \pi_*R((t))[[x]] 
\]
where $e(-)$ denotes the Euler class from \Cref{defn_Euler_class}.
\end{prop}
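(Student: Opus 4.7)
The plan is to unwind $\tch$ as the quotient of two projective orientations and compute each on an arbitrary vector bundle, reducing to line bundles via the splitting principle. By \Cref{defn_tch} combined with \Cref{defn_division_or}, the class $\tch(V)$ is characterized (up to a contractible space of choices) by the relation
\[
\tch(V) \cdot u^{t\T}(\ortate)(V) \simeq T(\omega^\proj)(V)
\]
in $\glone(R^{t\T})(X)$, so it suffices to identify the two projective orientations separately on $V$.

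Both sides of the proposed formula are multiplicative in $V$: the left-hand side $\tch$ because it is a map of spectra and hence an exponential characteristic class on $\Omega^\infty\bu\simeq BU$, and the right-hand side because the Euler class satisfies the Whitney formula $e((V_1\oplus V_2)\otimes L) = e(V_1\otimes L)\cdot e(V_2\otimes L)$, while $e(L)^{n_1+n_2} = e(L)^{n_1}\cdot e(L)^{n_2}$. By the splitting principle (\Cref{splitting_princ}) it thus suffices to verify the formula when $V = L'$ is a line bundle.

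For a line bundle $L'\to X$, I identify each orientation separately. By \Cref{defn_proj_or}, $\omega^\proj$ is obtained from $\omega$ via the $B\T$-fixed points functor applied to the $B\T$-action on $\J$. Under the correspondence between $\T$-equivariant objects over $X$ (with scalar $\T$-action) and non-equivariant objects over the Borel construction $X\times B\T$, $\omega^\proj$ sends $L'$ to the classical Thom class of $L'\otimes L\to X\times B\T$ computed with respect to $\omega$. Applying $T$ and pulling back along the zero section (\Cref{defn_Euler_class}) yields
\[
T(\omega^\proj)(L') = e(L'\otimes L) = F(e(L'), e(L)) \in R^2(X\times B\T)\subseteq R^{t\T}(X).
\]
On the other side, by \Cref{defn_Euler_or}, $\ortate$ is the nullhomotopy induced by the Euler transformation $a_{(-)}\colon \cnst{\Sph}\to \Sph^{(-)}$. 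On a one-dimensional vector space $\C$, the map $a_\C$ is the canonical $\T$-equivariant inclusion of the basepoint; under $T$ this becomes a trivialization of $\Sph^L$ in $\Perf(\Sph)^{t\T}$, corresponding via the Thom-Euler identification to $e(L) = t\in R^2(B\T)\subseteq R^{t\T}(\mathrm{pt})$. Since $\ortate$ is constructed functorially on $\J$ through the Euler transformation on the underlying vector space, its value on an arbitrary line bundle agrees with its value on a trivial one, giving $u^{t\T}(\ortate)(L') = e(L) = t$.

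Combining these computations,
\[
\tch(L') = \frac{T(\omega^\proj)(L')}{u^{t\T}(\ortate)(L')} = \frac{e(L'\otimes L)}{e(L)},
\]
which extends multiplicatively via the splitting principle to $\tch(V) = e(V\otimes L)/e(L)^n$ for a general rank-$n$ bundle $V$. The main obstacle is the identification of $u^{t\T}(\ortate)(L')$ with $e(L) = t$ uniformly over $L'$: this requires carefully unpacking the categorical construction of $\ortate$ from the Euler transformation and identifying the resulting trivialization in $\Perf(\Sph)^{t\T}$ with the classical Euler class under the Thom-Euler correspondence supplied by $\omega$. The identification of $T(\omega^\proj)(L')$ via the Borel construction is also delicate but follows more directly from the construction in \Cref{defn_proj_or}.
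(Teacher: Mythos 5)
Your overall strategy matches the paper's: unwind $\tch$ as the quotient of $T(\omega^\proj)$ by $u^{t\T}(\ortate)$ and identify the two Thom isomorphisms. But there is a genuine error in how you identify the $\ortate$ side, and it sits exactly where the denominator comes from. The Thom isomorphism attached to $u^{t\T}(\ortate)$ for a bundle $W\to X$ is, by construction, the inverse of pullback along the Euler map, $a^*_{W\otimes L}$ (invertible after the Tate construction by \Cref{Euler_transf_becomes_iso_after_tate}); the Thom class it assigns to a line bundle $L'$ is $(a^*_{L'\otimes L})^{-1}(1)$, which in terms of the $\omega$-Thom class equals $e(L'\otimes L)^{-1}\theta_{L'\otimes L}$. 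This genuinely depends on $L'$, and under no consistent reading is it ``$e(L)=t$'': its pullback along the zero section is $1$, and its ratio to $\theta_{L'\otimes L}$ is $e(L'\otimes L)^{-1}$. Consequently the unnormalized quotient of the two orientations on $V$ is simply $a^*_{V\otimes L}\theta_{V\otimes L}=e(V\otimes L)$ — the definition of the Euler class — with no denominator at all. The factor $e(L)^{n}$ in the statement comes from a different place entirely: the convention that an exponential characteristic class is extended to bundles of nonzero rank by requiring $c(\CC^n)=1$, i.e., by dividing by the value of the unnormalized quotient on the trivial bundle $\CC^n$, which is $e(\CC^n\otimes L)=e(L)^n=t^n$. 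Your argument never invokes this convention and instead manufactures the denominator from the false identification $u^{t\T}(\ortate)(L')=e(L)$; the final formula is correct, but the step producing the denominator is wrong.

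Two smaller points. The detour through the splitting principle is unnecessary: the paper's computation works for arbitrary $V$ in one step, precisely because ``apply the $\omega$-Thom isomorphism of $V\otimes L$ and then pull back along $a_{V\otimes L}$'' is literally multiplication by $e(V\otimes L)$. If you do want to reduce to line bundles, you should also check that $V\mapsto e(V\otimes L)/e(L)^n$ is unit-valued so that it defines an exponential characteristic class (this again uses \Cref{Euler_transf_becomes_iso_after_tate}). Your identification of the $T(\omega^\proj)$ side — the $\omega$-Thom class of $L'\otimes L$ over $X\times B\T$, with Euler class $F(e(L'),e(L))$ — is correct and agrees with the paper.
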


\begin{proof}
The characteristic class $\tch$ is defined as the quotient of the projective orientations $T(\omega^\proj)$ and $u^{t\T}(\ortate)$. Hence, to compute $\tch$ on a given vector bundle we need to compose the Thom isomorphism associated with $T(\omega^\proj)$ and the inverse of the Thom isomorphism associated with $u^{t\T}(\ortate)$.
On cohomology, the first isomorphism is gotten from the Thom isomorphism of the orientation $\omega$ (and inverting $t$)
\[
\theta_{V\otimes L}:R^{*}(X\times B\T)[t^{-1}] \simeq  R^{*+n}(\Sph^{V\otimes L})[t^{-1}]
\]
and the second from pullback along the Euler map 
\[
a_{V\otimes L}^* \colon R^*(\Sph^{V\otimes L})[t^{-1}] \iso R^*(X \times B\T)[t^{-1}].
\] 
Hence the composite 
\[
 R^{*}(X\times B\T)[t^{-1}] \oto{\theta_{V\otimes L}} R^{*+n}(\Sph^{V\otimes L})[t^{-1}]     \oto{a_{V\otimes L}^*}  R^{* + n}(X\times B\T)[t^{-1}]
\]
is (by definition) multiplication by the Euler class $e(V\otimes L)$, which is the term appearing in the numerator of the proposed formula. By our convention regarding the value of characteristic classes on bundles of non-zero dimension, the function $\tch$ is defined by taking the quotient of the above composite with the analogous one for the trivial bundle $\CC^n$ , which gives the $e(L)^n$-factor in the denominator.  
\end{proof}

\begin{rem}
The denominator appearing in the formula for $\tch$ is a byproduct of the fact that $\tch$ admits a finer structure: is corresponds to a $\ku$ (rather than $\bu$) orientation of $R^{t\T}$. We will return to this point in \Cref{section_sharp}. 
\end{rem}

\begin{prop}\label{prop_char_series_tch}
Let $R$ be an $\EE_\infty$-ring spectrum with $\EE_\infty$-orientation $\omega$, associated parameter $t\in R^2(B\T)$ so that $\pi_*R^{t\T} \simeq \pi_*R((t))$, and let $F$ be the formal group law associated with $\omega$, denoted $(u,v)\mapsto u +_F v$. Let $x\in (R^{t\T})^2\CP^\infty$ be the image of the parameter of the orientation of $R$ under the unit map $R\to R^{t\T}$, so that $(R^{t\T})^*\CP^\infty=R_*((t))[[x]]$. Then the characteristic series of $\tch$ is given by the power series 
\[
f(x,t) = \frac{(x +_F t)}{t}\in R_*((t))[[x]]. 
\]
\end{prop}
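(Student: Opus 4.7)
The plan is to apply Proposition \ref{prop:Euler_Tate_on_vb} to the tautological line bundle on $\CP^\infty$ and then translate the resulting Euler classes into power series using the formal group law. Recall that the characteristic series of an exponential characteristic class $c$ is defined (via Proposition \ref{splitting_princ}) as $c(L) \in R_*((t))[[x]]$, where $L \to \CP^\infty$ is the tautological line bundle and we have identified $(R^{t\T})^*\CP^\infty \cong R_*((t))[[x]]$. So our task reduces to computing $\tch(L)$.

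First, I would specialize the formula from Proposition \ref{prop:Euler_Tate_on_vb} to the case $X = \CP^\infty$, $V = L$ (the tautological line bundle), and $n = 1$. Writing $L_1$ for the pullback of $L$ to the first factor of $\CP^\infty \times B\T$ and $L_2$ for the tautological bundle on the second factor ($B\T$), the proposition yields
\[
\tch(L) = \frac{e(L_1 \otimes L_2)}{e(L_2)} \in R_*((t))[[x]].
\]
Second, I would identify each Euler class as a power series. By Remark \ref{rem_line_Euler}, the Euler class of a line bundle equals its first Chern class, i.e.\ the pullback of the parameter $t \in R^2\CP^\infty$ along the classifying map. For $L_2$ this classifying map is the projection onto $B\T$, so $e(L_2) = t$. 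For $L_1 \otimes L_2$, the classifying map is $\mu \circ (\mathrm{id} \times \mathrm{id})$ where $\mu \colon \CP^\infty \times \CP^\infty \to \CP^\infty$ is the tensor-product map, so by the very definition of the formal group law associated with $\omega$ we have $e(L_1 \otimes L_2) = x +_F t$.

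Combining these two identifications gives
\[
\tch(L) = \frac{x +_F t}{t} \in R_*((t))[[x]],
\]
which is the desired characteristic series $f(x,t)$.

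I do not expect a serious obstacle here: the substantive work was done in the proof of Proposition \ref{prop:Euler_Tate_on_vb}, which already packaged $\tch$ on a vector bundle as a ratio of Euler classes. The only minor point to verify is that the element $x \in (R^{t\T})^2 \CP^\infty$ appearing in the proposition's formula (the pullback of the orientation parameter under the unit $R \to R^{t\T}$, restricted to the first factor) is indeed the same $x$ used to express the characteristic series, which is immediate from the construction of the splitting principle isomorphism.
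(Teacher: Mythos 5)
Your proposal is correct and follows essentially the same route as the paper: both apply Proposition \ref{prop:Euler_Tate_on_vb} to the tautological line bundle with $n=1$ and then identify $e(L_1\otimes L_2)=x+_F t$ and $e(L_2)=t$ via the definition of the formal group law. No gaps.
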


\begin{proof}
By definition, $f(x,t)$ is the power series obtained by evaluating $\tch$ on the tautological bundle of $\CP^\infty$. To distinguish the two appearances of $L$ (one from the definition of projective orientations and the other from the definition of the characteristic series), let us denote by $L_1$ the tautological bundle on $\CP^\infty$ (whose parameter is $x$) and by $L_2$ the tautological bundle on $B\T$ (whose parameter we denote by $t$), so that $e(L_1) = x$ and $e(L_2) = t$. By the definition of the formal group law $F$, we then have 
\[
e(L_1 \otimes L_2) = x +_F t \in R^*(\CP^\infty \times B\T) \simeq \pi_*R[[x,t]].
\]
Hence, by \Cref{prop:Euler_Tate_on_vb} we have
\[
\tch(L_1) = \frac{e(L_1 \otimes L_2)}{e(L_2)} = \frac{x +_F t}{t}
\]
as desired.
\end{proof}

\begin{cor}\label{cor:coherent_chern}
    When $R=\Z$, the Euler-Tate characteristic class
    \[\tch: \bu\to\glone(\Z^{t\T})\]
    is an $\EE_\infty$-refinement of the total Chern class with respect to $t^{-1}$. That is, after identifying $\pi_*\Z^{t\T}=\Z((t))$, $|t|=2$,  
    \[
    \tch(V)= \sum_{i=0}^\infty c_i(V)t^{-i}.
    \]
\end{cor}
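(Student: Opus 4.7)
The plan is to apply Proposition \ref{prop_char_series_tch} to the specific case $R=\Z$ and then recognize the resulting characteristic series as the one that computes the total Chern class, invoking Example \ref{exmp_chern_series_for_Z} and the splitting principle.

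First I would recall that the integer Eilenberg-MacLane spectrum $\Z$ admits a canonical $\EE_\infty$-complex orientation whose associated formal group law is the additive one, i.e. $u +_F v = u + v$. In particular, the parameter $t \in \Z^2\CP^\infty$ satisfies $\pi_* \Z^{t\T} \simeq \Z((t))$ with $|t|=2$.

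Next, I would apply Proposition \ref{prop_char_series_tch} directly. It tells us that the characteristic series of $\tch$ with respect to the complex orientation of $\Z^{t\T}$ inherited from $\Z$ is given by
\[
f(x,t) = \frac{x +_F t}{t}.
\]
Substituting the additive formal group law, this simplifies to
\[
f(x,t) = \frac{x + t}{t} = 1 + x \cdot t^{-1}.
\]

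Finally, I would identify this power series with the one from Example \ref{exmp_chern_series_for_Z}, which computes the total Chern class. Indeed, setting $z = t^{-1}$ (of cohomological degree $-2$), the series $f(x) = 1 + z x$ is exactly the one treated in that example. The splitting principle computation recalled in Remark \ref{rem:splitting_explicit} then immediately yields
\[
\tch(V) = \sum_{k=0}^n c_k(V)\, t^{-k} = \sum_{i=0}^\infty c_i(V)\, t^{-i},
\]
the last equality using the convention $c_i(V) = 0$ for $i > \dim V$. This is the claimed identification of the underlying map of spaces with the total Chern class with respect to $t^{-1}$; the $\EE_\infty$-refinement structure is automatic because $\tch$ was constructed as a map of spectra in \Cref{defn_tch}.

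There is essentially no obstacle here since all the hard work has been done in \Cref{prop_char_series_tch}, \Cref{exmp_chern_series_for_Z}, and the splitting principle; the corollary is a direct specialization. The only mild care needed is to note that the parameter $z$ of Example \ref{exmp_chern_series_for_Z} has degree $-2$, matching $t^{-1}$, so the degree conventions are consistent.
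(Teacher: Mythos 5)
Your proposal is correct and matches the paper's own proof: both apply \Cref{prop_char_series_tch} with $R=\Z$, observe that the additive formal group law turns the characteristic series into $1 + xt^{-1}$, and identify this with the total Chern class series of \Cref{exmp_chern_series_for_Z} via the splitting principle. Your write-up is just a more detailed spelling-out of the same two-line argument.
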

\begin{proof}
    Note that when $F(x,y)=x+y$, the series $t^{-1}(x+_Ft)$ generates the total Chern class (cf. \Cref{exmp_chern_series_for_Z}). Thus we apply \Cref{prop_char_series_tch} with $R=\Z$ to see that $\tch$ is a lift of the total Chern class.
\end{proof}

\begin{rem}
The question of  whether the total Chern class admits an $\EE_\infty$-refinement goes back to Segal (\cite{segal}). Such a refinement was constructed by Lawson et al in \cite{lawsonetal} using, as the target spectrum, geometrically constructed infinite loop spaces built out of algebraic cycles in $\CP^n$. 
On the other hand, \Cref{cor:coherent_chern} provides such a lift with target $\glone(\Z^{t\T})$ using the Euler-Tate characteristic class instead. We expect that these two lifts are in fact the same (up to isomorphism), and in particular that the spaces of algebraic cycles in $\CP^n$ for $n\to \infty$ stabilizes to $\glone(\Z^{t\T})$, but we shall not consider this question further here. 
%{\red Change this if we eventually do :-)}
\end{rem}

\section{The sharp construction and the Jacobi orientation}\label{section_sharp}
In this section we use our theory to show that the ``\#''-construction of Ando-French-Ganter (\cite{afg}) lifts to an operation of $\EE_\infty$-orientations. Within the commutative setting, we also generalize their construction to apply to a larger class of rings, thereby producing a rich source of new $\EE_\infty$-orientations. In particular, this allows us to construct an $\EE_\infty$-refinement of the Jacobi orientation. We begin by recalling their setup.

Throughout this section, unless otherwise stated, $E$ will denote an \emph{even-periodic} homotopy ring spectrum and $n$ a non-negative integer. Such an $E$ always admits a homotopy complex orientation\footnote{A homotopy complex orientation always exists because the Atiyah-Hirzebruch spectra sequence for $E^*\CP^\infty$ collapses for degree reasons.} (i.e. homotopy ring map) $\MU\to E$. Recall that $\MU\l n\r$ is defined to be the Thom spectrum of $j$-homomorphism restricted to the $n$-th connective cover of $\ku$. For example, $\MU\l 0\r=\mathrm{MUP}$, $\MU\l2\r=\MU$, and $\MU\l 4\r=\MSU$.  

\begin{defn}\label{Cp_structure}
    Write $L_i$ for the pullback of the universal bundle $L$ along the $i$th projection map $(\CP^\infty)^n\to\CP^\infty$. A \emph{$C^n$-structure on $E$}, for $n>0$, is a choice of Thom class for 
    \[\Lambda(n):=\bigotimes_{i=1}^n (1-L_i)\rightarrow (\CP^\infty)^{\times n}\]
    which is invariant under the evident $\Sigma_p$-action, and which satisfies a ``cocycle condition'': write $d_0,d_1,..,d_n$ for the maps $(\CP^{\infty})^{n+1}\to(\CP^{\infty})^{n}$ occurring in the bar complex\footnote{So e.g. $d_0$ is the projection on to the last $n$ coordinates, $d_1$ multiplies the first two coordinates, $d_2$ multiplies the second two,..., and $d_n$ projects onto the first $n$ coordinates.} of the group $\CP^\infty$. The alternating sum of the pullbacks of $\Lambda(n)$ along the $d_i$ is canonically the rank 0 trivial bundle, so one requires that the same alternating sum of pullbacks of the chosen Thom class of $\Lambda(n)$ is $1$. 
    
    For $n=0$ a $C^0$-structure is a choice of ``generalized Thom class,'' namely an element $y\in E^0\CP^\infty$ such that the restriction of $y$ to the bottom cell $S^2$ is an invertible element $u^{-1}\in\pi_{2}E$. In particular, $uy$ is a Thom class for $L$.

    We write $C^n(E)$ for the set of $C^n$-structures on $E$.

\end{defn}

\begin{rem}
    This is an abbreviated form of the standard terminology---one usually speaks about a $C^n$-structure on a specified line bundle over the formal group $\G_E=\mathrm{Spf}(E^*\CP^\infty)$. The line bundle we have in mind is the one associated to the augmentation ideal $\mathrm{ker}(E^*\CP^\infty\to E^*(\mathrm{pt}))$.
\end{rem}

\begin{thm}(\cite{ahs})\label{AHS_Cp_to_or}
    Write $[\MU\l 2n \r,E]_\mathrm{hRing}$ for the set of homotopy classes of homotopy ring maps $\MU\l 2p\r\to E$. Then the map
    \[[\MU\l 2n \r,E]_\mathrm{hRing}\oto{}C^p(E)\]
    given by sending a homotopy orientation to the associated Thom class of $\Lambda(n)$ ($n>0$) or ``generalized Thom class'' ($n=0$) is an isomorphism when $n=0,1,2,3$.
\end{thm}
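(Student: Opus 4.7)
The plan is to translate a homotopy ring map $\omega\colon \MU\l 2n\r \to E$ into a system of multiplicative Thom classes, and then to show that the splitting principle reduces this data to precisely a $C^n$-structure. I would first use the Thom isomorphism applied to $\MU\l 2n\r = \mathrm{Th}(j|_{\ku\l 2n\r})$ to identify such a ring map with a compatible family of Thom classes $\theta_V \in E^{\dim V}(\Th(V))$ for every complex vector bundle $V \to X$ carrying a $\ku\l 2n\r$-structure, satisfying Whitney-sum multiplicativity $\theta_{V\oplus W} = \theta_V \cdot \theta_W$ together with the normalization $\theta_0 = 1$.

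Next, I would restrict to the universal example and pull back along the splitting map from $(\CP^\infty)^{\times k}$. The normalization combined with Whitney-sum multiplicativity forces the restriction on $(\CP^\infty)^n$ to factor through a trivialization of the virtual rank-zero bundle $\Lambda(n) = \bigotimes_{i=1}^n (1 - L_i)$: trivial summands contribute $1$ by normalization, so only $\Lambda(n)$ survives. The permutation action of $\Sigma_n$ on $(\CP^\infty)^n$ corresponds to relabeling summands in the Whitney-sum decomposition, producing the symmetry condition, while the bar maps $d_0,\ldots,d_n$ translate the associativity of multiplication on $\MU\l 2n\r$ into the cocycle condition of \Cref{Cp_structure}. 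The case $n = 0$ is somewhat degenerate: the generalized Thom class formulation simply records the fact that $\mathrm{MUP}$-orientations of an even periodic $E$ are complex orientations together with a chosen periodicity unit.

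The main obstacle will be surjectivity: producing $\omega$ from an abstract $C^n$-cocycle. Injectivity follows from the splitting principle, since pullback along $(\CP^\infty)^{\times k} \to B\U(k)$ is faithful on cohomology. For surjectivity, one must show that a given $C^n$-cocycle extends uniquely to a compatible family of Thom classes on \emph{every} $\ku\l 2n\r$-oriented bundle, not merely those pulled back from tori. This is an obstruction-theoretic argument controlled by the Postnikov tower of $\ku\l 2n\r$, whose $k$-invariants govern the extension problem. The restriction $n \leq 3$ is crucial here: for these small values, the tower has only controlled $k$-invariants and the relevant obstruction cohomology groups of the universal Thom spaces vanish in the appropriate degrees, yielding a unique lift. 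For $n \geq 4$ new $k$-invariants appear that create genuine obstructions, and the correspondence ceases to be a bijection.
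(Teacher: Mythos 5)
First, a point of comparison: the paper offers no proof of this statement at all --- it is quoted from Ando--Hopkins--Strickland \cite{ahs} as an external input --- so there is no internal argument to measure yours against. Judged on its own terms, your outline gets the formal translation right: a homotopy ring map $\MU\l 2n\r\to E$ is the same as a multiplicative, normalized system of Thom classes, its restriction along the map $(\CP^\infty)^{\times n}\to BU\l 2n\r$ classifying $\Lambda(n)$ yields a candidate $C^n$-structure, and the $\Sigma_n$-action and the bar maps $d_i$ account for the symmetry and cocycle conditions of \Cref{Cp_structure}. The $n=0$ discussion is also essentially correct.

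The genuine gap is in where you locate the difficulty. You claim injectivity ``follows from the splitting principle, since pullback along $(\CP^\infty)^{\times k}\to BU(k)$ is faithful on cohomology.'' The splitting principle controls $BU(k)$ and $\BU$, i.e.\ only the case $n=1$. For $n\ge 2$ a multiplicative Thom class is a map out of $BU\l 2n\r$ (via the Thom isomorphism), and what is needed is injectivity of $E^0 BU\l 2n\r\to E^0(\CP^\infty)^{\times n}$ along the map classifying $\Lambda(n)$ --- equivalently, that $E_0(\CP^\infty)^{\times n}$ generates $E_0 BU\l 2n\r$. That is precisely the hard computational content of \cite{ahs}, and it is exactly here (not only in surjectivity) that the hypothesis $n\le 3$ enters; your argument therefore proves neither direction for $n=2,3$. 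Moreover the mechanism in \cite{ahs} is not an obstruction-theoretic vanishing along the Postnikov tower of $\ku\l 2n\r$, but a computation of $E_0 BU\l 2n\r$ as a Hopf algebra using the Whitehead-tower fibrations $BU\l 2k+2\r\to BU\l 2k\r\to K(\Z,2k)$, the good behaviour of $E_*K(\Z,2k)$ for $k\le 3$, and (for $n=3$) a ``theorem of the cube''--style argument identifying the kernel of the restriction with exactly the symmetry-plus-cocycle ideal. For $n\ge 4$ these computations break down; whether the bijection genuinely fails there is a further question your sketch asserts but does not address.
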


Since $E$ admits a complex orientation, it is useful to choose one and then think of a $C^n$-structure as a ratio\footnote{This is because the Thom isomorphism for negative rank bundles is used.} of elements of $E_*[[x_1,..,x_n]]$ satisfying normalization, symmetry, and cocycle conditions. The key insight of Ando-French-Ganter (proved in a coordinate-free language) is that a $C^n$-structure $f(x_1,...,x_n)$ defines a $C^{n-1}$-structure in the first $n-1$-variables, as long as one is willing to change coefficient rings from $E_*$ to $E_*((x_n))$. Moreover, the latter is isomorphic to the coefficient ring of $E^{t\T}$, which is again even periodic.

\begin{defn}\label{defn_afg_sharp} (\cite[Proposition 5.6]{afg})
    Let $n\in \{0,1,2,3\}$ and suppose $E$ is an even periodic homotopy ring spectrum with a homotopy $\MU\l 2(n+1)\r$-orientation $\omega$ with corresponding $C^{n+1}$-structure $f_\omega(x_1,...,x_{n+1})$. Define the \emph{AFG-sharp} orientation 
    \[(\omega)^{\mathrm{afg}\#}:\MU\l2n\r\to E^{t\T}\]
    to be the one corresponding (under \Cref{AHS_Cp_to_or}) to the $C^n$-structure on $E^{t\T}$ $f_\omega(x_1,,,,x_{n},t)$ as outlined in the previous paragraph.
\end{defn}

Our goal in the following section is to provide an $\EE_\infty$-refinement of this construction, which will turn out to apply to arbitrary values of $n$ and arbitrary commutative ring spectra (as opposed to even periodic ones).

\subsection{The sharp construction is $\EE_\infty$}

The orientation theory presented in \Cref{eul_or_cats} was centered around the cofiber sequence $\bu\to \ku\to \pi_0\ku$. Instead of $\bu$, we can use a general connective spectrum with a map to $\ku$.

\begin{defn}
Let $\xi$ be a connective spectrum with a map $\alpha \colon \xi \to \ku$ and let $\cC\in \calg(\cat^\perf)$. We denote by
 $\Or(\cC;\alpha)$ the space of null-homotopies of the composite 
\[
\xi \oto{\alpha} \ku \oto{j} \pic(\Sph) \to \pic(\cC). 
\]
Similarly, for $\phi \colon \Perf(\Sph)^{B\T} \to \cC$ we denote by $\Or^\proj(\cC,\phi;\alpha)$ the space of null-homotopies of the composite 
\[
\xi \oto{\alpha} \ku \oto{\tilde{j}} \pic(\Sph)^{B\T} \oto{\phi} \pic(\cC). 
\]
\end{defn}
Both constructions are covariantly functorial in $\cC$ and contravariantly functorial in $\alpha$. In particular, for every $\alpha \colon \xi \to \ku$ we have a canonical projective orientation $\ortate \circ \alpha$ of $\Perf(\Sph)^{t\T}$; here, we use the fact that $\ortate$ extends to a null-homotopy of the map $T \circ \tilde{j}$ from $\ku$, not only its restriction to $\bu$. Moreover, the construction $\omega \mapsto \omega^\proj$ from \Cref{defn_proj_or} immediately generalizes to a map 
\[
\Or(\cC;\alpha) \to \Or^\proj(\cC^{B\T},u;\alpha)
\]
for general $\alpha$. We get the following generalization of \Cref{defn_tch}.
\begin{defn}\label{defn_general_tch}
    For $\omega \in \Or(\cC;\alpha)$ define the Euler-Tate characteristic class 
\[
\tch:=\frac{T(\omega^\proj)}{u^{t\T} (\ortate \circ \alpha)} \colon \xi \to \glone(\one_{\cC^{t\T}}).
\]
\end{defn}

One advantage of this generalized perspective is that we can relate orientations with respect to different spectra $\xi$. 
%Namely,
%given a map $\alpha\colon \xi \to \ku$, we can obtain ``weaker'' versions of it as follows.

\begin{defn}
Let $\alpha \colon \xi \to \ku$ be a map of connective spectra.
Define a new map $\alpha^\flat$ as follows: 
\[
\alpha^\flat \colon \Sigma^2\xi \oto{\Sigma^2 \alpha} \Sigma^2 \ku \simeq \bu  \oto{i} \ku.
\]
\end{defn}
For example, if $i_n\colon \Sigma^{2n}\ku \to \ku$ is the $2n$-th connective cover map, then 
\[
i_n^\flat \simeq i_{n+1}.
\]
Our analog of the AFG-sharp construction will turn orientations of $\cC$ with respect to $\alpha^\flat$ into orientations of $\cC^{t\T}$ with respect to $\alpha$, under appropriate assumptions on $\alpha$. Namely, we will achieve this for maps $\alpha$ that are \emph{$\ku$-module maps}. 
Our main tools are the Euler-Tate orientation and the following map, which ``goes up the Whitehead tower.''

\begin{defn}\label{shear}
The class $[\CC]-[L]\in \bu^0B\T$ corresponds to a map 
\[
\Sigma^\infty_+ B\T \to \bu
\]
which linearizes over $\ku$ to a map 
\[
B\T \otimes \ku \to \bu.
\]
Denote the adjoint of this map by 
\[
\shear\colon  \ku \to \bu^{B\T}.
\]
\end{defn}

\begin{lem}
The composite
\[
\ku \oto{\shear} \bu^{B\T} \oto{(j|_{\bu})^{B\T}}  \pic(\Sph)^{B\T}\\         
\]
is canonically homotopic to the difference of maps 
\[
\pi^*\circ j  - \tilde{j} \colon \ku \to \pic(\Sph)^{B\T}
\]
where
$\pi^*\colon \Perf(\Sph) \to \Perf(\Sph)^{B\T}$ is the constant action functor.  
\end{lem}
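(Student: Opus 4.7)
The plan is to reformulate the comparison using the adjunction $(-)\otimes \Sigma^\infty_+ B\T \dashv (-)^{B\T}$ on spectra: two maps $\ku \to \pic(\Sph)^{B\T}$ agree if and only if their adjoints $\Sigma^\infty_+ B\T \otimes \ku \to \pic(\Sph)$ do. For a class $\alpha \in \ku^0(B\T)$, let $m_\alpha\colon \Sigma^\infty_+ B\T \otimes \ku \to \ku$ denote the map obtained by multiplying $\alpha$ (viewed as a map $\Sigma^\infty_+ B\T \to \ku$) with the identity class of $\ku$ via the ring multiplication $\ku \otimes \ku \to \ku$. The calculation will then express both sides as $j$ post-composed with some $m_\alpha$.

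First I would identify the adjoint of the left-hand side $(j|_\bu)^{B\T}\circ \shear$. By \Cref{shear}, $\shear$ is adjoint to the $\ku$-linearization of $[\CC]-[L]$, which (since the $\ku$-module structure on $\bu$ is induced by the inclusion $\bu \subseteq \ku$) is exactly $m_{[\CC]-[L]}$ with codomain restricted to $\bu$. Composing with $j|_\bu$ yields $j\circ m_{[\CC]-[L]}$ as the adjoint of the LHS. Next I would identify the adjoints of the two terms on the right: $\pi^*\circ j$ has adjoint $j\circ m_{[\CC]}$ tautologically, since $\pi^*\colon \pic(\Sph) \to \pic(\Sph)^{B\T}$ is adjoint to the augmentation $\Sigma^\infty_+ B\T \to \Sph$, which corresponds to multiplication by $[\CC]=1$; and $\tilde{j}$ has adjoint $j\circ m_{[L]}$. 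Taking the difference and using bilinearity yields $j\circ(m_{[\CC]}-m_{[L]}) = j\circ m_{[\CC]-[L]}$, matching the LHS.

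The main obstacle is verifying the identification $\tilde{j} \simeq j^{B\T}\circ m_{[L]}$ coherently at the spectrum level, rather than just pointwise. The pointwise statement is the tautology that the equivariant Thom spectrum $\Sph^{V\otimes L}$ equals the $j$-image of the virtual bundle $V\otimes L$ over $B\T$. To make it precise one decomposes the symmetric monoidal functor $\Sph^{(-)}\colon \J \to \Perf(\Sph)^{B\T}$ of \Cref{defn_equi_j} as
\[
\J \xrightarrow{\,L\otimes -\,} \J^{B\T} \xrightarrow{\,(\Sph^{(-)})^{B\T}\,} \Perf(\Sph)^{B\T},
\]
and observes that, upon passing to group completions and Picard spectra, the first factor becomes $m_{[L]}\colon \ku \to \ku^{B\T}$ (by compatibility of group completion with the $B\T$-action on $\J^\simeq$ by tensoring with the tautological line bundle) while the second becomes $j^{B\T}$. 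Everything else in the proof is formal manipulation with the adjunction.
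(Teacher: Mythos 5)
Your proposal is correct and is essentially the paper's own argument: both reduce the claim to splitting $[\CC]-[L]$ into its two summands, identifying the $[\CC]$-term with $\pi^*\circ j$ via the unit/naturality of $\pi^*$, and identifying the $[L]$-term with $\tilde{j}$ from its definition as the mate of the $B\T$-action on $\J$. The only difference is cosmetic — you work with adjoint maps out of $\Sigma^\infty_+ B\T\otimes \ku$ while the paper works directly with the multiplication maps $\ku\to\ku^{B\T}$.
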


\begin{proof}
We are considering the composite
\[
\xymatrix{
\ku \ar@/^1.5 pc/[rr]^{[\CC]-[L]} \ar^{\shear}[r] & \bu^{B\T} \ar[r] & \ku^{B\T} \ar^{j^{B\T}}[r] & \pic(\Sph)^{B\T}  
}
.\]
It will thus suffice to show that the composite
\[
\xymatrix{
\ku \ar^{[\CC]}[r] & \ku^{B\T} \ar^{j^{B\T}}[r] & \pic(\Sph)^{B\T}
}
\]
is homotopic to $\pi^*\circ j$ 
and that the composite
\[
\xymatrix{
\ku  \ar^{[L]}[r] & \ku^{B\T} \ar^{j^{B\T}}[r] & \pic(\Sph)^{B\T}
}
\]
is homotopic to $\tilde{j}$. The first claim follows from the fact that $\pi^* \colon \ku \to \ku^{B\T}$ is the unit of the $\ku$-algebra $\ku^{B\T}$ and the commutativity of the square 
\[
\xymatrix{
\ku  \ar^{\pi^*}[r]\ar[d]      & \ku^{B\T}  \ar[d]\\
\pic(\Sph) \ar^{\pi^*}[r] & \pic(\Sph)^{B\T}
}
.\]
The second claim follows from the fact that the map $[L]\colon \ku \to \ku^{B\T}$ is the mate of the action map $B\T \otimes \ku \to \ku$ which appears in the definition of $\tilde{j}$ (\Cref{defn_equi_j}).
%\[
%B\T \times \J \to \J.
%\]
\end{proof}

Combining this with the Euler-Tate orientation, which is a nullhomotopy of the composite of $\tilde{j}$ and the Tate quotient functor, we obtain the following:
\begin{cor} \label{cor:shear_eul_tate_j}
The Euler-Tate orientation induces a homotopy rendering the following diagram commutative:
\[
\xymatrix{
\ku \ar^{\shear}[r] \ar^{j}[d] & \bu^{B\T} \ar^{j^{B\T}}[r] & \pic(\Sph)^{B\T} \ar^{T}[d]\\ 
\pic(\Sph) \ar^{\pi^*}[r] &  \pic(\Sph)^{B\T} \ar^{T}[r]            & \pic(\Perf(\Sph)^{t\T})
}.
\]
\end{cor}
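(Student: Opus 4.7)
The plan is to combine the previous lemma with the Euler--Tate orientation, which supplies the needed nullhomotopy to kill the $\tilde j$-term.

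First I would apply the previous lemma to rewrite the top composite of the square. The lemma identifies
\[
(j|_{\bu})^{B\T}\circ \shear \;\simeq\; \pi^*\circ j \;-\; \tilde{j} \colon \ku \too \pic(\Sph)^{B\T}.
\]
Post-composing with the symmetric monoidal functor $T\colon \pic(\Sph)^{B\T}\to \pic(\Perf(\Sph)^{t\T})$ yields
\[
T\circ j^{B\T}\circ \shear \;\simeq\; T\circ \pi^*\circ j \;-\; T\circ \tilde{j}.
\]
To finish, I would use the Euler--Tate orientation $\ortate$: by \Cref{defn_Euler_or} (and as noted in the remark preceding \Cref{defn_general_tch}, the nullhomotopy $Ta_{(-)}$ is defined already on all of $\ku$, not just on $\bu$), $\ortate$ provides a canonical nullhomotopy of $T\circ \tilde{j}$. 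Feeding this nullhomotopy into the displayed equivalence removes the subtracted term and gives the desired homotopy
\[
T\circ j^{B\T}\circ \shear \;\simeq\; T\circ \pi^*\circ j,
\]
which is exactly the commutativity of the square in question.

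The step I would treat most carefully is the last one: the extension of the nullhomotopy from $\bu$ to $\ku$. On $\bu$ the nullhomotopy is part of \Cref{defn_Euler_or}, but the statement we need uses it as a map out of $\ku$. This is legitimate because \Cref{Euler_transf_becomes_iso_after_tate} shows that $T\circ \tilde j$ is nullhomotopic as a map from the full $\ku$ (the Euler transformation $a_{(-)}$ becomes an isomorphism after $T$, so $T\circ \Sph^{(-)}\simeq T\circ \cnst{\Sph}$ on all of $\J^\simeq$, and the nullhomotopy group-completes accordingly). No other subtlety arises; the rest is an immediate diagram chase combining the two previous inputs.
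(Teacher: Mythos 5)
Your proposal is correct and is essentially the paper's own argument: the paper likewise applies $T$ to the identification $(j|_{\bu})^{B\T}\circ\shear \simeq \pi^*\circ j - \tilde{j}$ from the preceding lemma and cancels the $T\circ\tilde{j}$ term using the nullhomotopy supplied by the Euler--Tate orientation, which (as you correctly emphasize) is constructed on all of $\ku$ in \Cref{defn_Euler_or}. No further comment is needed.
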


We shall now exploit this observation to show that taking Tate fixed points with respect to $\T$ allows one to ``improve'' complex orientations. 
Let $\xi$ be a $\ku$-module. We abuse notation and denote by $i\colon \Sigma^2\xi \to \xi$ the tensor product of $\xi$ with the map $i\colon \bu \to \ku$, and by $\shear\colon \xi \to (\Sigma^2\xi)^{B\T}$ mate of the map $B\T \otimes \xi \to \Sigma^2\xi$ obtained from the map $B\T\otimes \ku \to \bu$ of \Cref{shear} by tensoring with $\xi$ over $\ku$.

\begin{construction}\label{constr_sharp}
For a $\ku$-module map $\alpha \colon \xi \to \ku$ and $\cC\in \calg(\cat^\perf)$ we construct a map $(-)^{\sharp} \colon \Or(\cC;\alpha^\flat) \to \Or(\cC^{t\T};\alpha)$ as follows. 
By definition, $\Or(\cC;\alpha^\flat)$ is the space of null-homotopies of the composite 
\[
\Sigma^2\xi \oto{\alpha^\flat} \ku \to \pic(\Sph) \to \pic(\cC).
\]
Consider the diagram 
%\[
%\xymatrix{
%\xi \ar^{1_\xi \otimes_\ku \shear}[r] \ar@[red]^{\alpha}[d]        & \Sigma^2\xi^{B\T}\ar@[blue]^{(\alpha^\flat)^{B\T}}[rd] \ar^{1_\xi \otimes_\ku i}[r]\ar^{\Sigma^2\alpha^{B\T}}[d] & \xi^{B\T} \ar^{\alpha^{B\T}}[d]   &   \\ 
%\ku\ar@[red]^j[d]  \ar^{\shear}[r]       & \bu^{B\T} \ar^{i^{B\T}}[r] & \ku^{B\T}\ar@[blue]^{j^{B\T}}[d] & \\ 
%\pic(\Sph)\ar@[red]^u[d]\ar^{\pi^*}@{..>}[rr] &   &  \pic(\Sph)^{B\T} \ar@[blue]^{u^{B\T}}[d] \ar^{T}[r] & \pic(\Perf(\Sph)^{t\T}) \ar^{u^{t\T}}[d]  \\ 
%\pic(\cC) \ar@[red]^{\pi^*}[rr]          &   &  \pic(\cC)^{B\T} \ar@[red]^{T}[r]   & \pic(\cC^{t\T})    \\ 
%}.
%\]
\[
\begin{tikzcd}
    \xi \arrow[r,"\shear"] \arrow[d,red,"\alpha"]        & (\Sigma^2\xi)^{B\T}\arrow[rd,blue,"(\alpha^\flat)^{B\T}"] \arrow[r, "i^{B\T}"]\arrow[d,"\Sigma^2\alpha^{B\T}"'] & \xi^{B\T} \arrow[d,"\alpha^{B\T}"]   &   \\ 
\ku\arrow[d,red,"j"]  \arrow[r,"\shear"]       & \bu^{B\T} \arrow[r,"i^{B\T}"] & \ku^{B\T}\arrow[d,blue,"j^{B\T}"] & \\ 
\mathrm{pic}(\S)\arrow[d,red,"u"]\arrow[rr,dashed,"\pi^*"] &   &  \mathrm{pic}(\S)^{B\T} \arrow[d,blue,"u^{B\T}"] \arrow[r,"T"] & \mathrm{pic}(\Perf(\Sph)^{t\T}) \arrow[d,"u^{t\T}"]  \\ 
\mathrm{pic}(\cC) \arrow[rr,red,"\pi^*"]          &   &  \mathrm{pic}(\cC)^{B\T} \arrow[r,red,"T"]   & \mathrm{pic}(\cC^{t\T})    \\ 
\end{tikzcd}.
\]
 The upper squares commute by the naturality of the tensor product of $\ku$-modules, the lower right square again by naturality, and the entire hook-shaped bottom part commutes\footnote{Note that the two rectangles in the bottom left do not commute in general, as indicated by the dashed arrow which is part of both of them.} via \Cref{cor:shear_eul_tate_j}. 

By applying the functor $(-)^{B\T}$, the space $\Or(\cC,\alpha^\flat)$ maps to the space of null-homotopies of the composite $u^{B\T} \circ j^{B\T} \circ (\alpha
^\flat)^{B\T}$ (indicated in blue). Precomposing with $\shear \colon \xi \to (\Sigma^2\xi)^{B\T}$ and postcomposing with $T\colon \pic(\cC)^{B\T} \to \pic(\cC^{t\T})$, we land in the space of null-homotopies of the upper right composite in the diagram, which is equivalent (via the the homotopies rendering the diagram commutative) to the space of null-homotopies of the left-bottom composite (indicated in red). The latter space is, by definition, $\Or(\cC^{t\T},\alpha)$, so we have constructed a map
\[
(-)^\sharp \colon \Or(\cC,\alpha^\flat) \to \Or(\cC^{t\T},\alpha). 
\]
\end{construction}

Observe also that there is an evident map\footnote{This map is denoted $\omega \mapsto \delta(\omega)$ in \cite{afg}, in the special case that $\xi$ is an $n$-connected cover of $\ku$.} $i^*\colon \Or(\cC,\alpha) \to \Or(\cC,\alpha^\flat)$ given by precomposition with the map 
\[
i\colon \Sigma^2\xi  \to \xi.
\]
%\begin{defn}\label{def_tch_xi} {\red unfinished}
%    Let $\xi\to ku$ be the $n$-connected cover. We get an $n$-connected analog of the Tate-valued chern map (\Cref{defn_tch})
%    \[\tch : \xi\to \glone(R^{t\T}). \]
%\end{defn}
\begin{prop} \label{prop:sharp_restriction}
Let $\alpha \colon \xi \to \ku$ be a $\ku$-module map and let $\cC\in \calg(\cat^{\perf})$. For $\omega \in \Or(\cC,\alpha)$ let $\tch \colon \xi \to \glone(\one_{\cC^{t\T}})$ be the corresponding Euler-Tate characteristic class (\Cref{defn_general_tch}). There is a commutative diagram of spaces (natural in $\cC$): 
\[
\xymatrix{
\Or(\cC,\alpha)\ar^{i^*}[rr]\ar^{((\tch)^{-1},(T\pi^*)_*)}[d] & & \Or(\cC,\alpha^\flat) \ar^{(-)^\sharp}[d] \\
\Map(\xi,\glone(\one_{\cC^{t\T}}))\times \Or(\cC^{t\T},\alpha)   \ar^-{\mathrm{act.}}[rr] & & \Or(\cC^{t\T},\alpha),  
}
\]
where the bottom map is the free transitive action of $\Map(\Sigma^2\xi,\glone(\one_{\cC^{t\T}}))$ on the space of orientations.

Thus, for $\omega\in \Or(\cC,\alpha)$ we have
\[
(i^*\omega)^{\sharp} \simeq (\tch)^{-1}\cdot T\pi^*(\omega).  
\] 
\end{prop}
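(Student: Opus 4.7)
The plan is to exhibit both $(i^*\omega)^{\sharp}$ and $T\pi^*(\omega)$ as null-homotopies of the common composite $T\pi^* u j \alpha \colon \xi \to \pic(\cC^{t\T})$ and to compute their ratio in the torsor structure of \Cref{rem:or_torsor}, identifying it with $(\tch)^{-1}$. The main structural input is the lemma preceding \Cref{cor:shear_eul_tate_j}, which, combined with the $\ku$-linearity of $\shear$, yields a decomposition of maps $\xi \to \pic(\cC^{t\T})$:
\[
T\pi^* u j \alpha \;\simeq\; u^{t\T} T\tilde{j} \alpha \;+\; h, \qquad h := T u^{B\T} j^{B\T}(\alpha^\flat)^{B\T}\shear_\xi,
\]
where $+$ denotes the group structure on maps into the infinite loop space $\pic(\cC^{t\T})$.

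First, I would unpack $(i^*\omega)^{\sharp}$ via \Cref{constr_sharp}. The inner part of that construction (applying $(-)^{B\T}$ to the null-homotopy $i^*\omega = \omega \circ i$, pre-composing with $\shear_\xi$, and post-composing with $T$) produces a canonical null-homotopy $M := T\bigl(\omega^{B\T}\circ i^{B\T}\circ \shear_\xi\bigr)$ of $h$; here one uses the $\ku$-linearity identity $(\alpha^\flat)^{B\T}\shear_\xi \simeq i^{B\T}\shear_\ku \alpha$. The final step of \Cref{constr_sharp}, which uses the bottom hook of its diagram, invokes precisely the Euler-Tate null-homotopy $u^{t\T}(\ortate \circ \alpha)$ of $u^{t\T} T\tilde{j}\alpha$ supplied by \Cref{cor:shear_eul_tate_j}. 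Thus, by construction, $(i^*\omega)^{\sharp}$ is the null-homotopy of $T\pi^* u j \alpha$ obtained by combining $u^{t\T}(\ortate \circ \alpha)$ with $M$ via the decomposition above.

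Next, I would analyze $T\pi^*(\omega)$ and connect it to $T(\omega^{\proj})$. The null-homotopy $T\pi^*(\omega)$ of $T\pi^* u j \alpha$ combines with the inverse null-homotopy $-M$ of $-h$ to yield a null-homotopy of $u^{t\T} T\tilde{j}\alpha$. Inspecting the generalization of \Cref{defn_proj_or} from $\bu$ to arbitrary $\ku$-module maps, and using the identity $\tilde{j} \simeq \pi^*j - (j|_{\bu})^{B\T}\shear_\ku$, one identifies this combination with $T(\omega^{\proj})$; informally, $T(\omega^{\proj}) + M \simeq T\pi^*(\omega)$. Invoking the defining relation of \Cref{defn_tch} (generalized per \Cref{defn_general_tch}), $T(\omega^{\proj}) \simeq \tch \cdot u^{t\T}(\ortate \circ \alpha)$, one then computes
\[
T\pi^*(\omega) \;\simeq\; T(\omega^{\proj}) + M \;\simeq\; \tch \cdot u^{t\T}(\ortate \circ \alpha) + M \;\simeq\; \tch \cdot (i^*\omega)^{\sharp},
\]
which is equivalent to the asserted identity $(i^*\omega)^{\sharp} \simeq (\tch)^{-1}\cdot T\pi^*(\omega)$.

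The main obstacle is the identification in the third paragraph, namely that $T(\omega^{\proj})$ really equals the null-homotopy obtained by combining $T\pi^*(\omega)$ with $-M$ under the shear decomposition. This requires verifying that the ``action-mate'' input to \Cref{defn_proj_or}, when generalized to $\ku$-module maps $\alpha$, translates through the shear via $\tilde{j} \simeq \pi^*j - (j|_{\bu})^{B\T}\shear_\ku$. Once that compatibility is established, the remainder of the argument is formal manipulation in the torsor of null-homotopies; the upper squares of the diagram in \Cref{constr_sharp} commute by naturality and contribute no non-trivial data, so the entire content of the equivalence is concentrated in the single Euler-Tate homotopy of \Cref{cor:shear_eul_tate_j}.
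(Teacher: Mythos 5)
Your proposal is correct and follows essentially the same route as the paper: both arguments reduce the claim to an Eckmann--Hilton computation with null-homotopies, using the shear identity $\pi^*j-\tilde{j}\simeq (j|_{\bu})^{B\T}\shear$, the relation $T(\omega^{\proj})\simeq \tch\cdot u^{t\T}(\ortate\circ\alpha)$ from the definition of $\tch$, and the identification of your $M=T\omega^{B\T}i^{B\T}\shear$ with the corresponding summand in the construction of $(-)^\sharp$. The only point you do not address is naturality in $\cC$ (and coherence of the identifications), which the paper handles by co-representing $\Or(\cC,\alpha)$ by $\Perf(M\alpha)$ and checking the diagram at the universal orientation only.
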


\begin{proof}
First, note that as a functor in $\cC\in \calg(\cat^\perf)$ the upper left corner of the diagram is co-represented by $\Perf(M\alpha)$, the perfect modules over the Thom spectrum of $\alpha$. Hence, to prove the commutativity naturally in $\cC$ it suffices to prove it for a single $\cC$ and after evaluation at a single point in the upper left corner (namely the universal $\alpha$-orientation of $M\alpha$). 
Both sides are paths in the space
$\Map(\Sigma^2\xi,\pic(\cC^{t\T}))$ connecting $0$ and the composite $T\pi^* u j  \alpha i$, with the notation as in the diagram of \Cref{constr_sharp}. 
To identify them more easily, we shall use the Eckmann-Hilton principle. 

Namely, given two maps $f,g\colon X\to Y$ of connective spectra and null-homotopies $h\colon 0 \iso f$ and $k\colon 0\iso  g$, by the Eckmann-Hilton argument the two paths 
\[
0 \oto{h} f \oto{k} f + g
\]
and 
\[
0 \oto{h + k} f + g 
\]
are homotopic. In particular, $-h\colon 0 \iso -f$ is homotopic to the $-f$-translate of $h^{-1}\colon f \to 0$. Now, recall that $\tch = \frac{T(\omega^\proj)}{u^{t\T}(\ortate)}$ (\Cref{defn_general_tch}). 
Note also that, in the notation of the current section, we have (cf. \Cref{defn_proj_or})
\[
\omega^\proj \simeq \omega^{B\T}[L]\pi^*
\]
where $[L]\colon \ku^{B\T} \to \ku^{B\T}$ is multiplication with the class of the tautological bundle, and that 
\[
\omega^{B\T}\pi^* \simeq \pi^*\omega.
\]
Thus, 
\begin{align*}
(\tch)^{-1} \cdot T\pi^* \omega &\simeq  -T\omega^{B\T}[L]\pi^* + u^{t\T} \ortate \alpha + T\omega^{B\T}\pi^* \simeq \\
&\simeq T\omega^{B\T}([\CC]- [L])\pi^* + u^{t\T}\ortate \simeq \\
&\simeq T\omega^{B\T}i^{B\T}\shear + u^{t\T} \ortate \alpha.
\end{align*}
where 
\begin{itemize}
\item the first identification follows by applying (twice) the Eckmann-Hilton argument as presented above,

\item the second identification is obtained by gathering the first and last terms, and 
\item the last identification is obtained from the equivalence $([\CC]-[L]) \pi^* \simeq i^{B\T}\shear$, which is immediate from the definition of $\shear$.
\end{itemize}

We now observe that the first summand $T\omega^{B\T}i^{B\T}\shear$ is precisely the homotopy contracting the composite $Tu^{B\T} j^{B\T} (\alpha^\flat)^{B\T} \tilde{\beta}$ in the diagram appearing in \Cref{constr_sharp} while the second summand $u^{t\T} \ortate \alpha$ is (up to translation) the homotopy rendering the bottom hook-shaped region commutative. Thus, the result follows from the construction of $\omega^{\sharp}$ in \Cref{constr_sharp}.  
\end{proof}

\begin{rem}\label{rem:swap_sharp_i}
In many cases, one can swap the order of $(-)^\sharp$ and $i^*$ in the result above. Namely, since $(-)^\sharp$ is natural in $\alpha$ and $i \circ \alpha \simeq \alpha^\flat \circ i$ for every $\ku$-module map $\alpha \colon \xi \to \ku$, we see that the square  
\[
\xymatrix{
\Or(\cC;\alpha^\flat) \ar^{\sharp}[r] \ar^{i^*}[d] & \Or(\cC^{t\T};\alpha) \ar^{i^*}[d] \\ 
\Or(\cC;(\alpha^\flat)^\flat) \ar^{\sharp}[r] & \Or(\cC^{t\T};\alpha^\flat)
}
\]
commutes naturally in $\cC\in \calg(\cat^{\perf})$. In particular, for $\omega \in \Or(\cC;\alpha^\flat)$ we deduce that 
\[
i^*(\omega^\sharp) \simeq (i^*\omega)^{\sharp} \simeq (\tch)^{-1}\cdot T\pi^*\omega. 
\]
\end{rem}

Next, in the special cases covered by \cite{afg}, we relate the $\sharp$-construction of Ando-French-Ganter to the $\EE_\infty$-multiplicative version of this section.  

\begin{thm}\label{thm_sharp_is_sharp}
    Let $E$ be an even periodic $\EE_\infty$-ring spectrum with an $\EE_\infty$-orientation $\omega: \MU\l 2(n+1)\r\to E$ for $n\in \{0,1,2,3\}$. Then the $\EE_\infty$ orientation $(\omega)^\sharp \colon \MU\l 2n\r\to E^{t\T}$ from \Cref{constr_sharp} is an $\EE_\infty$ lift of the AFG-sharp orientation $(\omega)^{\mathrm{afg}\#}$ of \Cref{defn_afg_sharp}.
\end{thm}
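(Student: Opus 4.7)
The strategy is to reduce the problem to a computation of $C^n$-structures. Since $E$ is even periodic so is $E^{t\T}$, and hence by \Cref{AHS_Cp_to_or}, for $n\in\{0,1,2,3\}$ the homotopy class of a ring map $\MU\l 2n\r\to E^{t\T}$ is uniquely determined by its associated $C^n$-structure. By the definition of AFG-sharp (\Cref{defn_afg_sharp}), $(\omega)^{\mathrm{afg}\#}$ corresponds to the $C^n$-structure $f_\omega(x_1,\ldots,x_n,t)$. Thus, I need to verify that the underlying homotopy class of $(\omega)^\sharp$ has the same $C^n$-structure.

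First I would use \Cref{rem:swap_sharp_i} to compute the underlying $\MU\l 2(n+1)\r$-orientation:
\[
i^*\bigl((\omega)^\sharp\bigr) \simeq (\tch)^{-1}\cdot T\pi^*\omega.
\]
The $C^{n+1}$-structure of $T\pi^*\omega$ is the base change of $f_\omega(x_1,\ldots,x_{n+1})$ to $(E^{t\T})^*((B\T)^{n+1})$, and by \Cref{prop_char_series_tch} the twist by $(\tch)^{-1}$ multiplies the Thom class of $\Lambda(n+1)$ by $\tch(\Lambda(n+1))^{-1}$, computed via exponentiality and the characteristic series $(x+_F t)/t$. On the other hand, the analogous restriction applied to $(\omega)^{\mathrm{afg}\#}$ produces the $C^{n+1}$-structure obtained by applying the canonical restriction map $C^n(E^{t\T})\to C^{n+1}(E^{t\T})$ to $f_\omega(x_1,\ldots,x_n,t)$. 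These two multiplicative expressions coincide precisely by the cocycle identity satisfied by $f_\omega$ as a $C^{n+1}$-structure on $E$---the very identity used by Ando--French--Ganter in \cite[Prop.~5.6]{afg} to verify that $f_\omega(x_1,\ldots,x_n,t)$ is a valid $C^n$-structure on $E^{t\T}$ in the first place.

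The main obstacle is the bookkeeping of this explicit combinatorial identity, together with the passage from matched $C^{n+1}$-structures of the restrictions back to matched $C^n$-structures of the orientations themselves. For the latter, one may argue via injectivity of the restriction map $C^n(E^{t\T})\to C^{n+1}(E^{t\T})$ (which holds by normalization at the origin and the non-vanishing of $t$ after Tate), or, more directly, by unwinding \Cref{constr_sharp}: the shear map $\shear$ introduces an extra $B\T$-coordinate whose parameter is exactly $t$, the equivariant orientation $\omega^{B\T}$ produces Thom classes for $\Lambda(n+1)$-type bundles over $(B\T)^n\times B\T$, and the Tate quotient $T$ inverts $t$. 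This procedure is the categorical lift of AFG's substitution $x_{n+1}\mapsto t$, and exhibits the $C^n$-structure of $(\omega)^\sharp$ directly as $f_\omega(x_1,\ldots,x_n,t)$, completing the proof.
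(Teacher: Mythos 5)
Your closing ``more directly'' argument is essentially the paper's proof verbatim: the paper reduces to identifying the $C^n$-structure via \Cref{AHS_Cp_to_or}, then unwinds \Cref{constr_sharp} by observing that $\shear$ produces the equivariant class $\Lambda(n)\otimes 1-\Lambda(n)\otimes L\simeq\Lambda(n+1)$ over $(\CP^\infty)^n\times B\T$, whose $\omega$-Thom class is the $C^{n+1}$-structure of $\omega$, and that $T$ then yields exactly the AFG substitution $x_{n+1}\mapsto t$. Your first route (restricting along $i^*$, comparing $C^{n+1}$-structures, and deducing equality of $C^n$-structures) is an unnecessary detour, and its key step --- injectivity of the map $C^n(E^{t\T})\to C^{n+1}(E^{t\T})$ --- is asserted rather than proved and is not obviously true; but since you supply the direct unwinding, the proof stands.
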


\begin{proof}
    By \Cref{AHS_Cp_to_or}, to identify the underlying homotopy ring map of $(\omega)^\sharp$ it suffices to identify the induced $C^n$-structure on $E^{t\T}$. This is the Thom class assigned by $(\omega)^\#$ to $\Lambda(n)$, which is an element $\theta_{\omega^\sharp}(\Lambda(n))\in (E^{t\T})^0(\mathrm{Th}(\Lambda(n)))$. 
    %{\red why its a class in the cohomology of that space? we divide by the ordinary orientation for that? as is it is a class in the cohomology of the Thom spectrum...}{\blue exactly, cf. the discussion just before \Cref{defn_afg_sharp}. Not that it's really necessary, we can write the Thom space here too }. 
    We will trace through the definition of $(\omega)^\sharp$ in steps. First, we form the $\ku\l 2(n+1)\r$-vector bundle with $\T$ action given by $\Lambda(n)_\triv-\Lambda(n)_\T$. Then we $E$-orient (produce an $E$-Thom class for) its image in $\pic(E)^{B\T}$ via the orientation $\omega^{B\T}$. This is equivalent to $E$-orienting the virtual bundle $\Lambda(n)\otimes 1-\Lambda(n)\otimes L$ over  $(\CP^\infty)^n\times B\T$ using the $\MU\l 2(n+1)\r$-orientation $\omega$. But that bundle is equivalent to $\Lambda(n+1)$, whose Thom class is the $C^{n+1}$-structure associated to $\omega$. Finally, we project this Thom class (via the functor $T$) to $E^{t\T}$. This is precisely the $C^n$-structure from the AFG-sharp construction (\Cref{defn_afg_sharp}).     
\end{proof}

\begin{rem}
    Since our sharp construction does not require $E$ to be even-periodic nor does it require $n=0,1,2,3$, it can be considered as a generalization of the AFG-sharp construction (in the $\EE_\infty$ setting).
\end{rem}

\begin{rem}\label{rem:cor_Frob_is_adj}
In \cite[Definition 5.12]{afg}, given a complex orientation $\omega$ of an even periodic homotopy ring spectrum $E$, the authors define the \emph{adjoint orientation} of $\omega$ to be (in our notation) the orientation $(i^*\omega)^{\mathrm{afg}\sharp}$. If $\omega$ is an $\EE_\infty$-orientation, then so is $i^*\omega$ and we obtain an $\EE_\infty$-lift $(i^*\omega)^{\sharp}$ of the adjoint orientation. Moreover, by \Cref{prop:sharp_restriction}, this lift can be identified with the twist of the original orientation of $E$ (after mapping it to $E^{t\T}$) by the $\EE_\infty$ characteristic class $(\tch)^{-1}$.  
\end{rem}

\subsection{The Jacobi orientation is $\EE_\infty$}

With \Cref{thm_sharp_is_sharp} in hand, we can prove that the the Jacobi orientation admits an $\EE_\infty$-lift.

\begin{defn}\label{def_jacobi_two_var}(\cite[Section 7]{afg})
    Write $\sigma:\MU\l 6\r \to \mathrm{tmf}$ for the complex string orientation (cf. \cite{ahr}). The \emph{Jacobi orientation} is the homotopy $\MSU$-orientation
    \[
        (\sigma)^{\mathrm{afg}\#}: \MSU\to \mathrm{tmf}^{t\T}
    \]
    gotten by applying the AFG-sharp construction (\Cref{defn_afg_sharp}).

    %Write $W:\MU\to KU[[q]]$ for the Witten genus of Tate $K$-theory. It is determined by the formula
    %\[
        %c_1(1-L)=(1-L)\prod_{k\geq 1}\frac{(1-q^kL)(1-q^kL^{-1})}{(1-q^n)^2}\in KU[[q]]^*\CP^\infty.
    %\]
    %Its adjoint $W^\mathrm{ad}$ (in the sense of \Cref{adjoint_genus}, see also \Cref{rem:cor_Frob_is_adj}) is called the \emph{two variable elliptic genus}.
\end{defn}

\begin{cor}\label{cor_jacobi_is_comm}
    The Jacobi orientation admits a lift to an $\EE_\infty$ $\MSU$-orientation.
\end{cor}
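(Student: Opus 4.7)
The plan is to combine the $\EE_\infty$ structure on the classical string orientation with the generalized $\sharp$-construction developed in \Cref{constr_sharp}. First, the string orientation $\sigma \colon \MU\l 6\r \to \mathrm{tmf}$ admits an $\EE_\infty$-refinement by the theorem of Ando--Hopkins--Rezk \cite{ahr}. Taking $\xi = \Sigma^4\ku$ and $\alpha = i_2 \colon \Sigma^4\ku \to \ku$ (a $\ku$-module map, since $i_n$ is just multiplication by a power of the Bott element), one has $\alpha^\flat = i_3$, and therefore $M\alpha^\flat = \MU\l 6\r$ while $M\alpha = \MSU$. Applying \Cref{constr_sharp} to $\cC = \Perf(\mathrm{tmf})$ then produces an $\EE_\infty$-ring map
\[
\sigma^\sharp \colon \MSU \too \mathrm{tmf}^{t\T}.
\]

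It then suffices to verify that the underlying homotopy ring map of $\sigma^\sharp$ agrees with the AFG Jacobi orientation $(\sigma)^{\mathrm{afg}\#}$ of \Cref{def_jacobi_two_var}. This is precisely the content of \Cref{thm_sharp_is_sharp} at $n = 2$ (which is in the allowed range $\{0,1,2,3\}$).

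The main obstacle is a mild mismatch of hypotheses: \Cref{thm_sharp_is_sharp} is formally stated for $E$ even periodic, whereas $\mathrm{tmf}$ is not. However, the target $\mathrm{tmf}^{t\T}$ \emph{is} even periodic, as the Tate construction inverts the complex Bott element coming from the unit. Tracing through the proof of \Cref{thm_sharp_is_sharp}, one sees that the matching of the $C^2$-structure on $\mathrm{tmf}^{t\T}$ goes through the image under $T$ of the Thom class (provided by the $\MU\l 6\r$-orientation $\sigma$) of the rank-zero virtual bundle $\Lambda(2)\otimes(1-L)$ over $(\CP^\infty)^2 \times B\T$; this computation only uses the orientation $\sigma$ of $\mathrm{tmf}$ and the $\MU\l 6\r$-Thom isomorphism on it, not even-periodicity of $\mathrm{tmf}$ itself. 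Then \Cref{AHS_Cp_to_or}, applied to the even-periodic ring spectrum $\mathrm{tmf}^{t\T}$, identifies the resulting $C^2$-structure with the one defining $(\sigma)^{\mathrm{afg}\#}$. Alternatively, one may factor $\sigma$ through an even-periodic form of $\mathrm{tmf}$ (e.g.\ $\mathrm{TMF}$ or $\mathrm{Tmf}[\Delta^{-1}]$), apply \Cref{thm_sharp_is_sharp} there directly, and use naturality of $(-)^\sharp$ in $\cC$ to transfer the identification back to the $\mathrm{tmf}^{t\T}$ level.
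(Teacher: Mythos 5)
Your argument is correct and takes essentially the same route as the paper, whose entire proof is to cite \cite{ahr} for the $\EE_\infty$-refinement of $\sigma$ and then invoke \Cref{thm_sharp_is_sharp} at $n=2$. You go further by confronting the even-periodicity hypothesis of \Cref{thm_sharp_is_sharp} (which $\mathrm{tmf}$ itself does not satisfy) --- a point the paper's two-line proof silently elides --- and your discussion there patches a gap in the paper's exposition rather than introducing one in your own.
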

\begin{proof}
     By \cite{ahr}, $\sigma$ admits a lift to a map of $\EE_\infty$-rings. Hence by 
     \Cref{thm_sharp_is_sharp} $(\sigma)^\#$ is an $\EE_\infty$-lift of $(\sigma)^{\mathrm{afg}\#}$. 
     %Similarly, it follows from \cite{ahr} {\red ideally I find a better reference} that the Witten genus $W$ of Tate $K$-theory admits an $\EE_\infty$ lift. Hence by \Cref{thm_sharp_is_sharp} and \Cref{adjoint_genus} its $W^\mathrm{ad}$ does too.
\end{proof}

    %Terminology Adjoint of Witten genus is two variable elliptic AFG Section 6.2. Witten genus = sigma orientation for Tate curve. (about a page into Section 7) Jacobi orientation (implicitly of a fixed elliptic spectrum) = adjoint of canonical $\MU\l 6\r$ orientation of that elliptic spectrum. (about a page into Section 7). In hindsight we can probably just talk about the string orientation of $tmf$, which was not available at the time of AFG. 

\subsection{Some examples of orientations}\label{section_surprise_or}
In this section we consider the following special case of \Cref{prop:sharp_restriction} and investigate some consequences thereof.
\begin{cor}\label{thm_frob_factor_sharp}
    Let $R$ be an $\EE_\infty$-ring spectrum endowed with an $\EE_\infty$-orientation $\omega \colon \MU\l n \r \to R$ for $n\ge 2$. The adjoint orientation (\Cref{rem:cor_Frob_is_adj}) 
    $\omega^\adjoint \colon \MU\l n \r \to E^{t\T}$ 
    factors, as an $\EE_\infty$-ring map, through the sharp orientation 
    \[
        (\omega)^\#:\MU\l n-2\r\to R^{t\T}.
    \]  
\end{cor}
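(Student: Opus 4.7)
The plan is to reduce the statement essentially to \Cref{rem:swap_sharp_i}, which already encodes the desired factorization at the level of abstract $\ku$-module maps. First, I would translate the data into the framework of \Cref{section_sharp} by setting $\xi := \Sigma^{n-2}\ku$ with its canonical $\ku$-module map $\alpha \colon \xi \to \ku$ given by the $(n-2)$-nd connective cover, so that $\alpha^\flat \colon \Sigma^n \ku \to \ku$ is the $n$-th connective cover map. Under this dictionary, an $\EE_\infty$ $\alpha^\flat$-orientation of $\Perf(R)$ is precisely the datum of $\omega \colon \MU\l n \r \to R$, while an $\EE_\infty$ $\alpha$-orientation of $\Perf(R)^{t\T}$ is precisely the datum of $\omega^\# \colon \MU\l n-2\r \to R^{t\T}$.

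Next, I would verify that the abstract $\ku$-module map $i \colon \Sigma^2\xi \to \xi$ of \Cref{shear} recovers, upon applying the Thom-spectrum-of-$j$ construction, the canonical Whitehead-tower map $p \colon \MU\l n \r \to \MU\l n-2 \r$. This follows directly from the definition of $\MU\l n\r$ as the Thom spectrum of $j$ restricted to the $n$-th connective cover of $\ku$, together with functoriality of the Thom-spectrum construction in the source. Consequently, the functors $i^*$ on orientation spaces (in both $\Perf(R)$ and $\Perf(R)^{t\T}$) correspond to $\EE_\infty$-precomposition with $p$.

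With these identifications in place, the corollary becomes a direct application of \Cref{rem:swap_sharp_i}. The natural equivalence
\[
i^*(\omega^{\sharp}) \simeq (i^{*}\omega)^{\sharp}
\]
translates, on the left, to the $\EE_\infty$-ring composite $\omega^\# \circ p \colon \MU\l n \r \to \MU\l n-2 \r \to R^{t\T}$, and, on the right, to the adjoint orientation $\omega^\adjoint$ as defined in \Cref{rem:cor_Frob_is_adj} (in the evident extension from complex orientations to general $\MU\l n\r$-orientations). This yields the required $\EE_\infty$ factorization.

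I do not expect any substantive obstacle: the content is already encoded in the naturality statement of \Cref{rem:swap_sharp_i}, whose proof uses both the naturality of $(-)^\sharp$ in $\alpha$ and the compatibility $i\circ \alpha \simeq \alpha^\flat \circ i$. The only remaining thing to spell out is the elementary identification of the abstract $\ku$-module map $i$ with the topological Whitehead-tower map $p$, which is routine.
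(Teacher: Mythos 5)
Your proposal is correct and follows essentially the same route as the paper, whose proof is exactly the combination of \Cref{rem:swap_sharp_i} (giving $i^*(\omega^\sharp)\simeq(i^*\omega)^\sharp$) with the identification, via \Cref{prop:sharp_restriction} and \Cref{rem:cor_Frob_is_adj}, of $(i^*\omega)^\sharp$ with the adjoint orientation; your additional unpacking of the dictionary $\xi=\Sigma^{n-2}\ku$ and of $i^*$ as precomposition with the Whitehead-tower map $\MU\l n\r\to\MU\l n-2\r$ is the routine part the paper leaves implicit. (Minor quibble: the map $i\colon\Sigma^2\xi\to\xi$ is introduced just before \Cref{constr_sharp}, not in \Cref{shear}.)
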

\begin{proof}
    This follows immediately from the combination of \Cref{prop:sharp_restriction} and \Cref{rem:swap_sharp_i}.
\end{proof}

In fact, let us further specialize to the case that $R=\MU\l n \r$ and $\omega$ is the identity map, which is of course $\EE_\infty$. We find that $\MU\l 2n\r^{t\T}$ admits an $\EE_\infty$ $\MU\l 2n-2\r$-orientation. Further specializing to low values of $n$ we find that $\MSU^{t\T}$ admits an $\EE_\infty$ $\MU$-orientation, and that $\MU^{t\T}$ admits an $\EE_\infty$ $\MUP$ orientation. Using this, we can provide an answer to a question of Hahn-Yuan (cf. \cite[Question 2]{hahnyuan}). 

Consider the group algebra $\S[\BU]$. Let $\beta:\S^2\to\S[\BU]$ be the Bott class. Snaith (\cite{snaithbu}) proved that there is an isomorphism of homotopy ring spectra
\[
\MUP_\mathrm{snaith}:=\S[\BU][\beta^{-1}]\simeq \MUP.
\]
In \cite{hahnyuan}, it is shown that $\MUP_\mathrm{snaith}$ is \emph{not} equivalent to $\MUP$ as an $\EE_\infty$-ring (in fact, not even as an $\EE_5$-ring). Therefore the following result is somewhat surprising. 
\begin{thm}\label{thm_both_MUP}
    $\MU^{t\T}$ admits an $\EE_\infty$-ring map from \emph{both} $\EE_\infty$-structures on periodic complex bordism. That is, there are $\EE_\infty$-ring maps
    \[\MUP\to \MU^{t\T}\]
    \[\MUP_\mathrm{snaith}\to \MU^{t\T}.\]
\end{thm}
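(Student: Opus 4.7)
The plan is in two parts, one for each $\EE_\infty$-ring map asserted by the theorem.

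For the $\MUP$ orientation: the existence of an $\EE_\infty$-map $\MUP \to \MU^{t\T}$ is essentially immediate from Corollary \ref{thm_frob_factor_sharp} once we recognize that the identity $\id\colon \MU \to \MU$ is an $\EE_\infty$-orientation and that $\MU \simeq \MU\l 2\r$, $\MUP \simeq \MU\l 0\r$ by definition. The sharp construction applied to $\id$ then outputs the required $\EE_\infty$-ring map $(\id)^\sharp \colon \MUP \to \MU^{t\T}$.

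For the Snaith orientation, I would proceed via the Euler-Tate characteristic class. Applied to $R = \MU$ with its identity $\EE_\infty$-orientation (Definition \ref{defn_tch}), this yields
\[
\tch \colon \bu \to \glone(\MU^{t\T}).
\]
Passing through the $\Sph[-] \dashv \glone$ adjunction turns $\tch$ into an $\EE_\infty$-ring map
\[
\Sph[\BU] \to \MU^{t\T}.
\]
To factor this through $\MUP_\mathrm{snaith} = \Sph[\BU][\beta^{-1}]$, the universal property of $\EE_\infty$-ring localizations reduces the problem to verifying that the image of the Bott class $\beta \in \pi_2\Sph[\BU]$ is invertible in $\pi_\ast \MU^{t\T} \simeq \MU_\ast((t))$.

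This last step is the concrete calculation, which I would carry out using Proposition \ref{prop_char_series_tch}: the characteristic series of $\tch$ is
\[
f(x,t) = t^{-1}(x +_F t) = 1 + t^{-1}x + O(x^2).
\]
The Bott class is represented by $[L]-1$ on $\CP^1 = S^2$, so its image under $\tch$ is obtained by restricting $f(x,t)-1$ to $\CP^1$ and using the relation $x^2 = 0$ to discard all higher-order terms. This leaves (up to the canonical identification of the image of $x\vert_{\CP^1}$ with the generator) the class $t^{-1}$, which is invertible in $\MU^{t\T}$ by construction of the Tate fixed points. Hence $\tch$ extends to an $\EE_\infty$-ring map $\MUP_\mathrm{snaith} \to \MU^{t\T}$.

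The real substance of the argument is confined to this Bott-class computation; the rest is formal, combining the $\Sph[-]\dashv\glone$ adjunction with the universal property of Bott-localization. The one point requiring care is matching conventions for the implicit $\EE_\infty$-structure on $\BU$ in the notation $\Sph[\BU]$: the adjunction produces a map out of $\Sph[\BU]$ with the $\EE_\infty$-structure inherited from $\bu$ as a connective spectrum, and I would want to confirm this is the structure used in the paper's definition of $\MUP_\mathrm{snaith}$, after which no further issue arises.
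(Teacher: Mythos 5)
Your argument is correct and follows the paper's proof essentially verbatim: the $\MUP$ map is the special case of \Cref{thm_frob_factor_sharp} for the identity orientation of $\MU = \MU\l 2\r$, and the Snaith map comes from running $\tch$ through the $\Sph[-]\dashv\glone$ adjunction and checking via \Cref{prop_char_series_tch} that $\beta$ lands on a unit. The one slip is the expansion $t^{-1}(x+_F t) = 1 + t^{-1}x + O(x^2)$: because of the cross terms $a_{1j}xt^j$ in the formal group law, the coefficient of $x$ is $t^{-1}(1 + a_{11}t + a_{12}t^2 + \cdots)$ rather than $t^{-1}$, but this is still a product of units in $\pi_*\MU((t))$, so the conclusion is unaffected.
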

\begin{proof}
    The first map is the one mentioned above, gotten as a special case of \Cref{thm_frob_factor_sharp}. The second map is gotten as follows. The space of $\EE_\infty$-ring maps $\S[\BU][\beta^{-1}]\to R$ is equivalent to the space of $\EE_\infty$-ring maps $f:\S[\BU]\to R$ such that $f(\beta)$ is an invertible element in $\pi_*R$. On the other hand, the adjunction
    \[
        \S[\Omega^\infty(-)]:\mathrm{Sp}^\mathrm{cn}\adj \mathrm{CAlg(Sp)}:\glone
    \]
    exhibits the space $\EE_\infty$-ring maps $f:\S[\BU]\to R$ as being equivalent to the space of maps of connective spectra $f^T:\bu\to\glone R$. Moreover, since $\beta\in\pi_2\S[\BU]$ lifts to an unstable class $\beta\in \pi_2\BU=\pi_2\bu$, we see that $f(\beta)$ corresponds to $f^T(\beta)$ under the canonical isomorphism $\pi_2R=\pi_2\glone R$. Therefore, to specify an $\EE_\infty$-ring map $\MUP_\mathrm{snaith}\to R$ it suffices to specify a map of spectra $\bu\to\glone R$ which sends $\beta\in \pi_2\bu$ to a unit in $\pi_*R$. We are interested in the case $R=\MU^{t\T}$ and we have a candidate map, namely  (cf. \Cref{defn_tch})
    \[
        \tch:\bu\to\glone \MU^{t\T}.
    \]
    It remains to check that $\beta\in \pi_2\bu$ is sent to an invertible element of $\pi_*R$. Now $\beta$ factors as 
    \[
        \S^2=\CP^1\to \CP^\infty\oto{L-1}\bu
    \]
    and \Cref{prop_char_series_tch} gives us an explicit formula for the value of $\tch$ on $L-1$; it is the characteristic series $t^{-1}(x+_Ft)$, where $F$ is the universal formal group law on $\pi_*\MU$ . The restriction of the characteristic series to $\CP^1$ corresponds to taking reduction modulo $x^2$, and so the image of $\beta$ in $\pi_2\MU^{t\T}$ is the coefficient of $x$ in $t^{-1}(x+_Ft)$. Write $F(y,z)=\sum a_{ij}y^iz^j$. We get
    \[
        \tch_*(\beta)=t^{-1}\sum_{j=0}^\infty a_{1j}t^j=t^{-1}(1+a_{11}t+a_{12}t^2+...)\in\pi_2\MU^{t\T}
    \]
    which is the product of two units in $\pi_*\MU^{t\T}=\pi_*\MU((t))$, and hence a unit.
\end{proof}

\begin{rem}
    Note that neither map defined above is an isomorphism, as $\pi_*\MUP$ is not isomorphic to $\pi_*\MU^{t\T}$; the former is countable in each degree while the latter is not.
\end{rem}

%\begin{rem}
%    {\red maybe too philosophical, can omit :)} It is interesting to consider where these two maps came from: the $\EE_\infty$-ring map $\MUP\to \MU^{t\T}$ is a consequence of the sharp construction  (\Cref{constr_sharp}), which at its core relies on the fact that the Tate construction vanishes on free orbits. The $\EE_\infty$-ring map $MUP_\mathrm{snaith}\to \MU^{t\T}$ is a consequence of the further fact that in the Tate construction, $t$ is both invertible \emph{and} a power series element, so that there are a lot of units in $\pi_*\MU^{t\T}$.
%\end{rem}

\Cref{thm_both_MUP} lets us also provide an answer to \cite[Question 5]{hahnyuan}, which asks about $\EE_\infty$ orientations of `forms of periodic integral homology' by `forms of periodic complex bordism'. In this language (cf. \cite[Remark 1.14]{hahnyuan}), $\Z^{t\T}$ is a form of periodic integral homology, and $\MUP$ and $\MUP_\mathrm{snaith}$ are forms of periodic complex bordism. We have the following result.

\begin{cor}\label{cor_forms_of_periodic}
    There are $\EE_\infty$-ring maps
    \[\MUP\to \Z^{t\T}\]
    \[\MUP_\mathrm{snaith}\to \Z^{t\T}.\]
\end{cor}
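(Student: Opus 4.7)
The plan is to reduce immediately to \Cref{thm_both_MUP}. Namely, we already have $\EE_\infty$-ring maps $\MUP\to\MU^{t\T}$ and $\MUP_\mathrm{snaith}\to\MU^{t\T}$, so it suffices to produce an $\EE_\infty$-ring map $\MU^{t\T}\to\Z^{t\T}$ and compose.

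To build $\MU^{t\T}\to\Z^{t\T}$, I would first invoke the standard (additive) $\EE_\infty$-complex orientation $\MU\to H\Z$, which is classical (e.g. it is the base case of the family of $\EE_\infty$-orientations constructed by Ando-Hopkins-Rezk and lives in the category of $\EE_\infty$-rings by construction). Since both $\MU$ and $H\Z$ carry the trivial $\T$-action, and since the categorical Tate construction $(-)^{t\T}\colon \calg(\Sp^{B\T})\to\calg(\Sp)$ is a symmetric monoidal functor between presentably symmetric monoidal $\infty$-categories (it preserves the commutative algebra structure), applying it to the $\EE_\infty$-map $\MU\to H\Z$ produces the desired $\EE_\infty$-ring map $\MU^{t\T}\to \Z^{t\T}$.

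Composing with the two maps from \Cref{thm_both_MUP} yields the two required $\EE_\infty$-ring maps $\MUP\to \Z^{t\T}$ and $\MUP_\mathrm{snaith}\to \Z^{t\T}$. There is no real obstacle here: the content was already in \Cref{thm_both_MUP}, and the corollary is a formal consequence of the functoriality of the Tate construction on $\EE_\infty$-algebras combined with the existence of the $\EE_\infty$-orientation $\MU\to H\Z$.
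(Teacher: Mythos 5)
Your proposal is correct and takes essentially the same route as the paper: one composes the two maps of \Cref{thm_both_MUP} with the $\EE_\infty$-ring map $\MU^{t\T}\to\Z^{t\T}$ obtained by applying $(-)^{t\T}$ to the ($\EE_\infty$) truncation map $\MU\to\Z$. The only small imprecision is that $(-)^{t\T}$ is merely \emph{lax} symmetric monoidal rather than symmetric monoidal, but lax monoidality already suffices to send $\EE_\infty$-ring maps to $\EE_\infty$-ring maps, so the argument goes through.
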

\begin{proof}
    We can compose each of the maps of \Cref{thm_both_MUP} with any $\EE_\infty$-ring map $\MU^{t\T}\to\Z^{t\T},$
    for example the one gotten by applying $(-)^{t\T}$ to the truncation map $\MU\to \Z$.
\end{proof}

\section{Further directions}
In this section we present some further directions for study.

There are real analogs of almost all of the results presented here, replacing $\C$ everywhere with $\R$. So for example, one uses the real $j$-homomorphism $\ko\to \pic(\S)$ instead of the complex one, the group $C_2\subset \R^\times$ instead of $\T$, and $\EE_\infty$ maps $\MO\to R$ instead of $\MU\to R$. The Euler-Tate class of \Cref{defn_tch} becomes a map of spectra
\[
w^\mathfrak{et}:\bo\to\glone R^{tC_2}
\]
The characteristic series is of the same form as in \Cref{prop_char_series_tch}, but the parameters are now of degree $1$ and correspond to real orientations\footnote{In particular, the formal group law is always isomorphic, but not necessarily equal, to the additive one.}. When $R=\F_2$, this produces an $\EE_\infty$-lift of the total Stiefel-Whitney class. 

There is another, simpler way to produce a lift of the Total Stiefel-Whitney class. If $u:R\to R^{tC_2}$ is the unit and $\mathrm{Frob}:R\to R^{tC_2}$ is the Tate-valued Frobenius of \cite{ns}, then one obtains an $\EE_\infty$-characteristic class valued in $\F_2^{tC_2}$ by applying the two natural maps $\F_2 \to \F_2^{tC_2}$ to its unique real orientation:  
\[
\omega = \frac{\mathrm{Frob}\circ\gamma}{u\circ \gamma},
\]
where $\gamma \colon \MO \to \F_2$ is the unique real orientation of $\F_2$. We expect that these constructions agree, so that  
\[
w^\mathfrak{et}\cdot (u\circ\gamma) = \mathrm{Frob}\circ\gamma
\] 
as $\EE_\infty$ maps $\MO\to R^{tC_2}$. This is a reflection of the classical fact that the total Stiefel-Whitney class can be recovered from the total Steenrod square of the Thom class. The absence of a ``Frobenius'' $R\to R^{t\T}$ makes the complex situation much more interesting, which is why we chose to focus our attention there.

The existence of a parallel story for $\R$ and $\C$ leads naturally to the following question.

\begin{ques}
    Is there a ``real'' variant of the projective orientation theory given here, which traces the complex conjugation action of $C_2$ on $\C$ and thus combines the real and complex projective orientation theories into one?
\end{ques}

Vector bundles, $j$-homomorphisms, Chern classes, and spectra all have motivic analogs, so it is natural to ask about motivic versions of projective orientation theory, which would formally imply the real version above by taking the base-field to be $\RR$.

\begin{ques}
    Are there motivic analogs of the projective orientation theory, and do they provide $\EE_\infty$-lifts of the motivic chern classes?
\end{ques}
In the case of the the total Chern class in $\ell$-adic cohomology, Akhil Mathew has pointed out that an $\EE_\infty$-lift might formally follow from our results if, after suitable $\ell$-completion, the Euler-Tate class
\[\tch:\bu_\ell\to\glone R^{t\T_\ell} \]
can be made equivariant for the $\Z_\ell^\times$ action by Adams operations on the source and by group automorphisms of $\T_\ell=B\Z_\ell$ on the target.

\bibliographystyle{plain}

\bibliography{references}

\end{document}